\newtheorem{theorem}{Theorem}
\newtheorem{prop}[theorem]{Proposition}
\newtheorem{lem}[theorem]{Lemma}
\newtheorem{note}[theorem]{Note}
\newtheorem{defn}[theorem]{Definition}
\newtheorem{exmp}[theorem]{Example}
\newtheorem{cor}[theorem]{Corollary}
\newcommand{\RR}{\mathbb{R}}
\title{Stability for Layer Points}
\author{Katharine L. M. Adamyk}
\address{Western University, Department of Mathematics, Middlesex College,
London, Ontario, Canada  N6A 5B7}
\email{kadamyk@uwo.ca}
\begin{document}

\maketitle

\section*{Abstract}
In the first half this paper, we generalize the theory of layer points \cite{Layers} to the more general context of $\vec{v}$-hierarchical clusterings \cite{RS}.
Layer points provide a compressed description of a hierarchical clustering by recording only the points where a cluster changes.  For multi-parameter hierarchical clusterings we consider both a global notion of layer points and layer points in the direction of a single parameter.  An interleaving of hierarchical clusterings of the same set induces an interleaving of global layer points.  In the particular, we consider cases where a hierarchical clustering of a finite metric space, $Y$, is interleaved with a hierarchical clustering of some sample $X \subseteq Y$.

In the second half, we focus on the hierarchical clustering $\pi_0 L_{-,k}(Y)$ for some finite metric space $Y$.  When $X \subseteq Y$ satisfies certain conditions guaranteeing $X$ is well dispersed in $Y$ and the points of $Y$ are dense around $X$, there is an interleaving of layer points for $\pi_0 L_{-,k}(Y)$ and a truncated version of $L_{-,0}(X) = V_{-}(X)$.  Under stronger conditions, this interleaving defines a retract from the layer points for $\pi_0 L_{-,k}(Y)$ to the layer points for $\pi_0 L_{-,0}(X)$.

\section{Introduction}

At its most basic level, a clustering of a data set is any collection of its disjoint subsets.  Of course, in practice, these subsets should indicate some relationship between data points and there are a wide variety of methods that attempt to form clusters in a meaningful way. 
If a clustering method depends on a choice of parameter, we can consider how the clustering changes as the parameter varies.  Heuristically, in a (covariant) hierarchical clustering, clusters appear or merge together as the parameter increases.  (Whereas, in a contravariant hierarchical clustering, clusters break apart or disappear as the parameter increases.)  

Rather than a single parameter, a hierachical clustering may be indexed instead by the elements of any partially ordered set.  In particular,   we will utilize the $\vec{v}$-hierarchial clusterings of Rolle and Scoccola \cite{RS} as a framework for multi-parameter hierarchical clusterings.   
Consider $H$, a $\vec{v}$-hierarchical clustering of a set $Z$ for some $\vec{v} \in \{-1,1\}^n$.  Such a hierarchical clustering is indexed by the elements of $R^{\vec{v}}$, which is the set $\RR^{n}$ with a partial order that depends on $\vec{v}$ (see Section~\ref{subsec:background}).  
The set \[\Gamma(H) = \{ (\vec{s}, S) | \vec{s} \in R^{\vec{v}}, S \in H(\vec{s})\},\]
has a partial order
where $(\vec{s}, S) \leq (\vec{t}, T)$ if $\vec{s} \leq \vec{t}$ and $S \subseteq T$.  This is a sub-poset of $R^{\vec{v}} \times \mathcal{P}(Z)$ where $\mathcal{P}(Z)$ is the power set of $Z$ with the partial order given by inclusion. 

Consider the equivalence relation on $\Gamma(H)$ generated by setting $(\vec{t}, T) \sim (\vec{s}, S)$ whenever $\vec{t} \leq \vec{s}$ and $S=T$.  The equivalence classes of this equivalence relation are called the layers of $\Gamma(H)$. 
Alternately, layers can be viewed as the path components of a graph whose vertices are the elements of $\Gamma(H)$ with edges $(\vec{s}, S) \to (\vec{t}, S)$ for all $\vec{s} \leq \vec{t}$ with $S \in H(\vec{s}) \cap H(\vec{t})$.  
This graph is the component graph defined in \cite{Layers}.  

Layer points are distinguished points on the boundary of a layer, and we consider two types of them. 
A global layer point for a $\vec{v}$-hierarchical clustering is some $(\vec{s}, S)$ in $\Gamma(H)$ where $S$ does not appear in $H(\vec{t})$ for any $\vec{t} < \vec{s}$.  
An $i^{th}$-parameter layer point of a multiparameter hierarchical clustering, $H$, is a global layer point of a single-parameter slice of $H$.  
In both cases, a layer point marks a first occurrence of a cluster, keeping in mind that when considering multiple parameters the ``first'' occurrence may not be unique.  In order to guarantee the existence of such points, we restrict our attention to $\vec{v}$-hierarchical clusterings that are bounded below (see Definition~\ref{defn:bbdbelow}). 

Taking maximal lower bounds within layers defines a (not necessarily unique) map from $\Gamma(H)$ to layer points.  This leads to our first stability result. 
The layer points of single parameter hierarchical clusterings are stable in the sense that an interleaving of hierarchical clusterings induces a homotopy interleaving of layer points (Theorem~\ref{thm:stability1}).  In particular, we will be interested in these interleavings of layer points for approximations of a hierarchical clustering by a hierarchical clustering of a subsample of points. 

A standard example of a 2-parameter hierarchical clustering is given by taking the path components of the Lesnick, or degree-Rips, complex associated to a subset of a metric space. The Lesnick complexes, $L_{s,k}(Z)$ filter the Vietoris-Rips complex, $V_s (X)$. (See Section~\ref{sec:layer_stab} for a brief introduction to these complexes.)  This is the context in which layer points were originally introduced by Jardine in \cite{Layers}.  We consider the single parameter slices of this clustering where the density parameter is constant.

Let $X$ be a sample (subset) of a finite metric space $Y$.  We define the hierarchical clustering $L_0 X[c]$, a truncated version of $\pi_0 L_{-, 0}(X)=\pi_0 V_{-}(X)$, as an approximation for $L_k Y = \pi_0 L_{-,k}(Y)$, the $2^{nd}$-parameter slice of $\pi_0 L_{-,-}(Y)$.  This approximation is stable---if $X$ and $Y$ satisfy an appropriate notion of closeness, then there is an interleaving of $L_0 X[c]$ and $L_k Y$ (Lemma~\ref{lem:posets}). 

To define what it means for $X$ and $Y$ to be close, we consider two directional notions of distance between $X$ and $Y$.  The first is the directional Hausdorff distance, $h(Y, X)$.
The second is a directional distance adjusted for density, $N_k(Y, X)$.  
The value of $N_k(X, Y)$ gives a threshold for the parameter at which all points in $X$ have $k$ neighbours in $Y$. The stability results presented in this paper apply to sets $X \subseteq Y$ where $h(Y, X)$ and $N_k(X,Y)$ are small.  

Let $s_0 < s_1 < \ldots < s_M$ be the phase change numbers of $X$, which are the distinct distances between elements of $X$.
Our strongest stability result states that if
$2 h(Y, X) + N_k(X,Y) < s_{i+1} -s_i$ 
for all $i$, then the poset of branch points of $\pi_0 L_{-, 0}(X)=V_{-}(X)$ is a retract of the poset of layer points of $\pi_0 L_{-,k}(Y)$ (Corollary~\ref{cor:smallparam}). 
We show this by constructing an interleaving of the form
\[\begin{tikzcd}
\Lambda(L_0 X) \ar[r, "id"] \ar[d] & \Lambda(L_0 X) \ar[d] \\
\Lambda(L_k Y) \ar[r] \ar[ur] & \Lambda(L_k Y) 
\end{tikzcd}\]
where the upper triangle commutes and the lower triangle commutes up to homotopy. 
The existence of this diagram follows from a more general result, which requires the truncated version of the hierarchical clustering, $L_0 X$.  

The hierarchical clustering $L_0 X[c]$ is defined so $L_0 X[c](s)$ is empty if $s<c$, but is otherwise equal to $L_0 X(s)$.  
In Theorem~\ref{theorem:main}, we prove the existence of a homotopy commutative diagram of posets, 
\[\begin{tikzcd}[column sep= large]
\Lambda(L_0 X[c]) \ar[d, "i_{\epsilon}"] \ar[r, "(id_Y)_{\epsilon+\delta}"]& \Lambda(L_0 X[c]) \ar[d, "i_{\epsilon}"]\\
\Lambda(L_k Y) \ar[r, "(id_Y)_{\epsilon+\delta}"] \ar[ur, "\theta_\delta"] & \Lambda(L_k Y),
\end{tikzcd}\]
where the maps depend on a choice of nonnegative numbers $\epsilon$, $\delta$, and $c$ satisfying $\delta \geq 2 h(Y, X)$, and $N_k(X,Y)-\epsilon \leq c \leq \delta$.

If these numbers satisfy the further condition
\[ c+\epsilon+\delta < s_{i+1} - s_i \]
for all $i$, then the map $\theta_{\delta}i_{\epsilon}: \Lambda(L_0 X[c]) \to \Lambda(L_0 X[c])$ is the identity.  This gives a retract,
\[ \theta_\delta: \Lambda(L_k Y) \to \Lambda(L_0 X[c]). \]

Consequently, if $N_k(X,Y)+2h(Y,X) < s_{i+1}-s_i$ for all $i$, there is a retract
\[ \Lambda(L_k Y) \to \Lambda(L_0 X). \]
An alternate proof of this corollary (Corollary~\ref{cor:smallparam}) can be given by showing \[\Lambda(L_0 X[c]) \cong \Lambda(L_0 X)\] for all $c$ satisfying $c< s_{i+1}-s_i$ for all $i$.
The maps in the interleaving of $\Lambda(L_0 X)$ and $\Lambda(L_k Y)$ of Corollary~\ref{cor:smallparam} could thus be given piecewise without defining the truncated hierarchical clustering, $L_0 X[c]$.  However, these piecewise maps are still essentially passing to the truncated clustering, and using $L_0 X[c]$ allows for the weaker statement in Theorem~\ref{theorem:main}, where $\Lambda(L_0 X[c])$ and $\Lambda(L_0 X)$ are not necessarily isomorphic.

\section{Layer Points}

\subsection{Layers}\label{subsec:background}
Let $Z$ be any set.  A clustering of $Z$ is a collection of disjoint, non-empty subsets of $Z$, called clusters.  The clusters need not cover $Z$, and, in particular, a clustering may be empty. Let $C(Z)$ be the set of clusterings of $Z$ with the partial order where $A \leq B$ if each cluster in $A$ is contained in a (unique) cluster of $B$. 

We will consider families of clusterings indexed by partially ordered sets of a particular form. 
\begin{defn}
Take $\vec{v} \in \{-1, 1\}^{n}$ for some $n \geq 1$ and let $R^{\vec{v}}$ be the product poset \[R_{v_1} \times R_{v_2}  \times \cdots \times R_{v_n}\] where $R_1 = \RR$ and $R_{-1}= \RR^{op}$.
A $\vec{v}$-hierarchical clustering  of $Z$ is an order preserving map
\[ H : R^{\vec{v}} \to C(Z), \]
such that $H(\vec{s})=\emptyset$ for some $\vec{s} \in R^{\vec{v}}$.
\end{defn}
Equivalently, if the posets $\RR^n$ and $C(Z)$ are regarded as categories,  $H$ is a functor from $\RR^n$ to $C(Z)$ that is covariant in the $i^{th}$ parameter if $v_i=1$ and contravariant in the $i^{th}$ parameter if $v_i=-1$.
\begin{note}
This is slightly different from the original definition of a $\vec{v}$-hierarchical clustering in \cite{RS}, where the domain of each parameter is $\RR_{>0}$, not $\RR$.  Since we require $H(\vec{s})=\emptyset$ for some $\vec{s}$ (and hence $H(\vec{t})=\emptyset$ for all $\vec{t} \leq {\vec{s}}$), these definitions are equivalent up to a shift in parameters.
\end{note}

For any $\vec{v}$-hierarchical clustering, $H$, we define a poset, $\Gamma(H)$, as in \cite{Branches}.  
The elements of $\Gamma(H)$ are all pairs $(\vec{s}, S)$ with $\vec{s} \in R^{\vec{v}}$ and $S$ a cluster in $H(\vec{s})$.  
There is a comparison $(\vec{s}, S) \leq (\vec{t}, T)$ if $\vec{s} \leq \vec{t}$ and $S \subseteq T$.  
This is the restriction of the product partial order on $R^{\vec{v}} \times \mathcal{P}(Z)$.

\begin{defn}
Let $\sim$ be the equivalence relation on $\Gamma(H)$ generated by the relation $(\vec{s}, S) \sim (\vec{t}, T)$ if $\vec{s} \leq \vec{t}$ and $S=T$.
An equivalence class of $\sim$ is called a {layer} of $\Gamma(H)$.
\end{defn}
Each layer
is of the form 
\[ \gamma= \{ (\vec{s}, S) | \vec{s} \in P\} \]
for some $P \subseteq R^{\vec{v}}$. 
We call the set $P \subseteq R^{\vec{v}}$ the support of $\gamma$ and may write $\gamma=(P, S)$.  

Supports of layers must satisfy the following property.   
\begin{prop}\label{prop:updown}
If $P \subseteq R^{\vec{v}}$ is the support of a layer for some $\vec{v}$-hierarchical clustering, then $P$ is equal to the intersection of the up-set of $P$,
\[ U_P := \bigcup_{\vec{p} \in P} \{ \vec{r} \in R^{\vec{v}} | \vec{r} \geq \vec{p} \} \]
and the down-set of $P$,
\[ D_P := \bigcup_{\vec{p} \in P} \{ \vec{r} \in R^{\vec{v}} | \vec{r} \leq \vec{p} \}. \]
\end{prop}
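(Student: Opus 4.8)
The plan is to prove the set equality by establishing the two inclusions $P \subseteq U_P \cap D_P$ and $U_P \cap D_P \subseteq P$ separately. Write the layer in question as $\gamma = (P,S)$, using the fact (immediate from the generating relation of $\sim$, which only identifies pairs with equal cluster) that every element of $\gamma$ has the same cluster $S$, so that $P = \{\vec{s} \in R^{\vec{v}} : (\vec{s},S) \in \gamma\}$. The inclusion $P \subseteq U_P \cap D_P$ is then immediate: each $\vec{p} \in P$ satisfies $\vec{p} \geq \vec{p}$ and $\vec{p} \leq \vec{p}$, so $P \subseteq U_P$ and $P \subseteq D_P$.

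For the reverse inclusion I would fix $\vec{r} \in U_P \cap D_P$ and, by definition of the up-set and down-set, choose $\vec{p}, \vec{q} \in P$ with $\vec{p} \leq \vec{r} \leq \vec{q}$; transitivity of the order gives $\vec{p} \leq \vec{q}$ as well. Since $\vec{p},\vec{q} \in P$, we have $S \in H(\vec{p})$ and $S \in H(\vec{q})$. The core of the argument is a "sandwiching" step using that $H$ is order preserving: from $\vec{p} \leq \vec{r} \leq \vec{q}$ we get $H(\vec{p}) \leq H(\vec{r}) \leq H(\vec{q})$ in $C(Z)$, so $S$ is contained in some cluster $S' \in H(\vec{r})$ and $S'$ is contained in some cluster $S'' \in H(\vec{q})$, whence $S \subseteq S' \subseteq S''$ with $S'' \in H(\vec{q})$.

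The key observation is that $S$ is itself a nonempty cluster of $H(\vec{q})$ and that distinct clusters of a single clustering are disjoint; therefore any cluster of $H(\vec{q})$ containing the nonempty set $S$ must equal $S$, forcing $S'' = S$ and hence $S = S' = S''$. In particular $S = S' \in H(\vec{r})$, so $(\vec{r},S) \in \Gamma(H)$. Finally, since $\vec{p} \leq \vec{r}$ and $S \in H(\vec{p}) \cap H(\vec{r})$, the pair $(\vec{p},S)$ is related to $(\vec{r},S)$ by the generating relation defining $\sim$; as $(\vec{p},S) \in \gamma$, this shows $(\vec{r},S) \in \gamma$, i.e. $\vec{r} \in P$, completing the inclusion $U_P \cap D_P \subseteq P$.

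There is no substantial obstacle here; the argument is short. The only points requiring care are making the "same cluster throughout a layer" observation explicit so that the notation $\gamma = (P,S)$ is justified, and invoking the \emph{nonemptiness} of clusters (part of the definition of a clustering) so that the disjointness-of-clusters step actually pins down $S'' = S$ rather than merely $S \subseteq S''$.
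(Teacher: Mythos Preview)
Your proof is correct and follows essentially the same approach as the paper's own proof: both establish the easy inclusion $P \subseteq U_P \cap D_P$ directly, then for the reverse inclusion pick $\vec{p} \leq \vec{r} \leq \vec{q}$ with $\vec{p},\vec{q} \in P$, use the order-preserving property of $H$ to sandwich $S \subseteq S' \subseteq S''$, and invoke disjointness of clusters in $H(\vec{q})$ to force $S'' = S' = S$. Your version is slightly more explicit in justifying the notation $\gamma = (P,S)$ and in flagging the role of nonemptiness, but the argument is otherwise identical.
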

\begin{proof}
Suppose $(P, S)$ is a layer in $\Gamma(H)$ for some $\vec{v}$-hierarchical clustering, $H$.  
By definition, $U_P$ and $D_P$ both contain all elements of $P$.   If $\vec{r} \in U_P \cap D_P$, then there exist $\vec{p}, \vec{q} \in P$ such that $\vec{p} \leq \vec{r} \leq \vec{q}$. 
Consequently, $H(\vec{p}) \leq H(\vec{r}) \leq H(\vec{q})$. Since $S \in H(\vec{p})$, this means there exist unique clusters $S' \in H(\vec{r})$ and $S'' \in H(\vec{q})$ satisfying
\[ S \subseteq S' \subseteq S'' \]
However, $S \in H(\vec{q})$, so $S''$ must be equal to $S$, and therefore $S'$ is equal $S$ as well.  This proves $(\vec{r}, S) \in \Gamma(H)$ exists and $(\vec{p}, S) \sim (\vec{r}, S)$.  So, $(\vec{r},S) \in (P,S)$ and hence $\vec{r} \in P$.  
\end{proof}
When the clustering has a single parameter, the supports of layers are all intervals.  
A non-example and an example of sets that could potentially support a layer of a two-parameter clustering are depicted in Figure~\ref{fig:layer_region}.

\begin{figure}
    \centering
    \begin{tikzpicture}[scale=0.8]
    \fill[gray!20] (-6,1) circle (1cm);
        \fill[gray!20] (1,1) arc
    [
        start angle=0,
        end angle=90,
        x radius=1cm,
        y radius =1cm
    ] ;
    \fill[gray!20] (0,2) arc
    [
        start angle=180,
        end angle=270,
        x radius=1cm,
        y radius =1cm
    ] ;
    \fill[gray!20] (2,-1)--(2,0)--(1,0)--(3,0)--(3,-.5)--(4,-0.5)--(4,-1);
    \draw (2,0)--(2,-1)--(4,-1);
    \draw[dashed] (1,1) arc
    [
        start angle=0,
        end angle=90,
        x radius=1cm,
        y radius =1cm
    ] ;
    \draw (0,2) arc
    [
        start angle=180,
        end angle=270,
        x radius=1cm,
        y radius =1cm
    ] ;
    \draw[dashed](2,0)--(3,0)--(3,-.5)--(4,-0.5);
    \draw[<->](-1.2,-1.5)--(4,-1.5);
    \node[below] at (4,-1.5){$R_1$};
    \draw[<->](-1,-1.7)--(-1,3);
    \node[left] at (-1,3){$R_2$};
     \node at (0.5,1.5){$P_0$};
     \node at (2.5,-.5){$P_1$};
      \draw[<->](-8.2,-1.5)--(-4,-1.5);
    \node[below] at (-4,-1.5){$R_1$};
    \draw[<->](-8,-1.7)--(-8,3);
    \node[left] at (-8,3){$R_2$};
    \draw[gray] (-5,1) arc
    [
        start angle=0,
        end angle=90,
        x radius=1cm,
        y radius =1cm
    ] ;
    \draw[gray] (-5,1)--(-5,-1.7);
    \draw[gray] (-6,2)--(-8.2,2);
    \node[left, gray] at (-8.2,2) {$\partial D_Q$}; 
    \draw (-7,1) arc
    [
        start angle=180,
        end angle=270,
        x radius=1cm,
        y radius =1cm
    ] ;
    \draw (-7, 1)--(-7,3);
    \draw (-6,0)--(-4,0);
    \node[above] at (-7,3) {$\partial U_Q$};
    \node at (-6, 1) {$Q$};
    \fill[white] (2,-1) circle (.09cm);
    \draw (2,-1) circle (.09cm);
    \end{tikzpicture}
    \caption{The disk $Q$ cannot support a layer, as $U_Q \cap D_Q$ contains points outside of $Q$. 
    The shaded sets in the right diagram, $P_0$ and $P_1$, each satisfy the requirements to support a layer. }
    \label{fig:layer_region}
\end{figure}
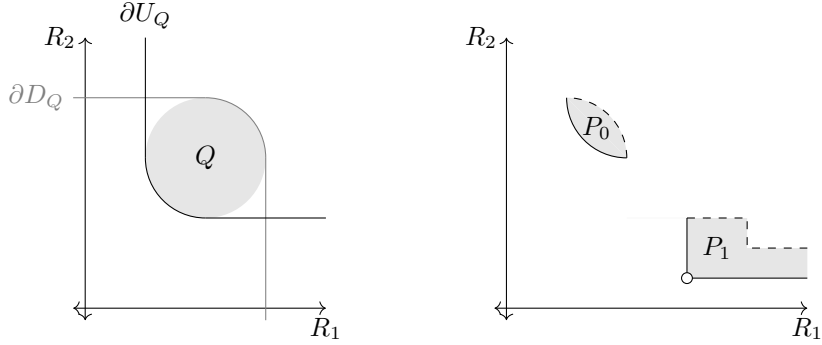

Note that there may be layers of the form $(P, S)$ and $(Q, S)$ for some distinct $P$ and $Q$.  In this case, any $p \in P$ and $q \in Q$ are incomparable.   (Otherwise, $(\vec{p}, S)$ and $(\vec{q}, S)$ would be in the same equivalence class.) This implies the following corollary of Proposition~\ref{prop:updown}.

\begin{cor}
Let $H$ be a $\vec{v}$-hierarchical clustering of $Z$.  For any $S \subseteq Z$, let
\[ Supp(S) = \{ \vec{s} \in R^{\vec{v}} | S \in H(\vec{s}) \}. \]
Then $Supp(S) = U_{Supp(S)} \cap D_{Supp(S)}$.
\end{cor}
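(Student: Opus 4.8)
The plan is to prove the two inclusions separately. The inclusion $Supp(S) \subseteq U_{Supp(S)} \cap D_{Supp(S)}$ is immediate from the definitions of the up-set and down-set (every element of a set lies both above and below itself), so all of the work is in the reverse inclusion. For that, I would take an arbitrary $\vec{r} \in U_{Supp(S)} \cap D_{Supp(S)}$; by definition this means there exist $\vec{p}, \vec{q} \in Supp(S)$ with $\vec{p} \leq \vec{r} \leq \vec{q}$ in $R^{\vec{v}}$, and the goal is to show $S \in H(\vec{r})$, i.e. $\vec{r} \in Supp(S)$.

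The key step mirrors the proof of Proposition~\ref{prop:updown}. Since $H$ is order-preserving, $\vec{p} \leq \vec{r} \leq \vec{q}$ yields $H(\vec{p}) \leq H(\vec{r}) \leq H(\vec{q})$ in $C(Z)$. Because $S \in H(\vec{p})$, the definition of the partial order on $C(Z)$ provides unique clusters $S' \in H(\vec{r})$ and $S'' \in H(\vec{q})$ with $S \subseteq S' \subseteq S''$. But $S$ is itself a cluster of $H(\vec{q})$, and the clusters of a clustering are disjoint, so the unique cluster of $H(\vec{q})$ containing $S$ must be $S$ itself; hence $S'' = S$, and therefore $S' = S$ as well. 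This gives $S \in H(\vec{r})$, completing the reverse inclusion and hence the proof.

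I would emphasize one point that is the conceptual content here: unlike Proposition~\ref{prop:updown}, this argument never uses that $\vec{p}$ and $\vec{q}$ lie in a common layer — only that $S$ is a cluster at each of them. This matters because $Supp(S)$ may genuinely decompose as a union of the supports of several distinct layers $(P,S)$, and one cannot deduce the corollary purely formally from Proposition~\ref{prop:updown} applied layerwise, since a union of subsets of $R^{\vec{v}}$ each equal to its own $U \cap D$ need not itself have that property. It is precisely the clustering structure — disjointness of clusters combined with the functoriality of $H$ — that forces the intermediate point $\vec{r}$ back into $Supp(S)$. I do not anticipate a real obstacle; the only thing to keep straight is the bookkeeping of which poset each comparison lives in ($R^{\vec{v}}$, $C(Z)$, or $\mathcal{P}(Z)$), and the reuse of the disjointness argument already appearing in the proposition's proof.
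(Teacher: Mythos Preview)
Your proof is correct but takes a different route from the paper's. You re-run the sandwich argument of Proposition~\ref{prop:updown} directly on $Supp(S)$, observing that the argument never actually needed $\vec{p}$ and $\vec{q}$ to lie in the same layer. The paper instead deduces the corollary from Proposition~\ref{prop:updown} together with one extra fact you did not mention: distinct layers $(P_i,S)$ and $(P_j,S)$ have pairwise incomparable elements, so $U_{P_i} \cap D_{P_j} = \emptyset$ when $i \neq j$. With the cross terms gone, the union $\bigcup_i (U_{P_i} \cap D_{P_i})$ equals $\bigl(\bigcup_i U_{P_i}\bigr) \cap \bigl(\bigcup_j D_{P_j}\bigr) = U_{Supp(S)} \cap D_{Supp(S)}$, and Proposition~\ref{prop:updown} finishes it. Your remark that the corollary cannot be obtained ``purely formally'' from the proposition is right in spirit---the paper needs this incomparability input---but it \emph{is} how the paper proceeds. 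Your direct approach is cleaner and shows the corollary is really a mild generalization of the proposition rather than a consequence of it; the paper's approach has the virtue of making explicit how $Supp(S)$ decomposes into layer supports and why that decomposition behaves well.
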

\begin{proof}
The set $Supp(S)$ is the union of a family of disjoint sets $\{P_i\}_{i \in I}$ where each $P_i$ is the support of a distinct layer, $(P_i, S)$. For any $\vec{r} \in U_{P_i} \cap D_{P_j}$, there exist $\vec{p} \in P_i$ and $\vec{q} \in P_j$ such that $\vec{p} \leq \vec{r} \leq \vec{q}$.  If $i \neq j$, no $\vec{p} \in P_i$ and $\vec{q} \in P_j$ are comparable, so $U_{P_i} \cap D_{P_j} $ is empty.  Hence,
\[ Supp(S) = \bigcup_{i \in I} P_i = \bigcup_{i \in I} (U_{P_i} \cap D_{P_i}) =  \left(\bigcup_{i \in I} U_{P_i} \right) \cap  \left(\bigcup_{j \in I} D_{P_j} \right) = U_{Supp(S)} \cap D_{Supp(S)}. \]
\end{proof}
In Figure~\ref{fig:layer_region}, $P_0$ and $P_1$ could potentially support layers associated to the same cluster.
Not every $P \subseteq R^{\vec{v}}$ satisfying $P = U_P \cap D_P$ will necessarily be equal to $Supp(S)$ for some cluster, $S$, of an arbitrary $\vec{v}$-hierarchical clustering.  However, for any such $P$, if $Z$ has a proper subset $W$, then there exists at least one $\vec{v}$-hierarchical clustering of $Z$ where $P=Supp(W)$.
One such hierarchical clustering is given by
\[ H(\vec{s}) = 
\begin{cases}
\emptyset &= \vec{s} \in R^{\vec{v}} \setminus U_P \\
\big\{W\big\} &=  \vec{s} \in P \\
\big\{Z\big\} &= \vec{s} \in U_P \setminus P. \\
\end{cases}\]

From here on, we restrict to $\vec{v}$-hierarchical clusterings satisfying the following condition.  

Consider $R^{\vec{v}}$ as the topological space $\RR^n$, and let $\partial X$ denote the boundary of $X \subseteq R^{\vec{v}}$.
We define the lower boundary of $X$ to be
\[ D_X \cap \partial X. \]
\begin{defn}\label{defn:bbdbelow}
A set $X \subseteq R^{\vec{v}}$ is closed below if the lower boundary of $X$ is contained in $X$.

A $\vec{v}$-hierarchical clustering, $H$, is closed below if the support of every layer of $\Gamma(H)$ is closed below.
\end{defn}

For example, in Figure~\ref{fig:layer_region}, $P_0$ is closed below, but $P_1$ is not.  

Restricting to clusterings that are closed below will be used to guarantee the existence of minimal points within a layer.  These minima give the locations of layer points. 

\begin{defn}
Let $H$ be a $\vec{v}$-hierarchical clustering of a set $Z$.  An element $(\vec{s}, S)$ of the poset $\Gamma(H)$ is a (global) layer point if
it satisfies one of the following conditions:
\begin{enumerate}[label=\roman*.)]
    \item there is no $(\vec{t}, T) \in \Gamma(H)$ with $(\vec{t}, T) < (\vec{s}, S)$, or
    \item for all $(\vec{t}, T) \in \Gamma(H)$ with $(\vec{t}, T) < (\vec{s}, S)$, $|T| < |S|$.
\end{enumerate}
The set of global layer points in $\Gamma(H)$ is denoted $\Lambda(H)$. 
\end{defn}
If $(\vec{t}, T) \leq (\vec{s}, S)$, then $T \subseteq S$. In the second case, the condition $|T|<|S|$ is therefore equivalent to $T \neq S$.  
So, the layer points are precisely the points in $\Gamma(H)$ where a cluster changes.  The poset $\Gamma(H)$, or equivalently the graph it defines, thus provides a condensed version of the information needed to define the $\vec{v}$-hierarchical clustering, $H$.  

A layer point can also be described as an element $(\vec{s}, S) \in (P,S)\subseteq \Gamma(H)$ where $D_{\{\vec{s}\}} \cap P = \{\vec{s}\}$.  For every layer point $(\vec{s}, S)$, $\vec{s}$ lies on the lower boundary of the support of the associated layer.  

\begin{exmp}
Suppose there is a layer, $(P, S)$ in $\Gamma(H)$ where $P$ is the shaded region in 
Figure~\ref{fig:layer_points}.  An element $(\vec{s}, S) \in (P,S)$ is a layer point if $\vec{s}$ is $\vec{s}_2$ or any point on the portion of the lower boundary of $P$ between $\vec{s}_0$ and $\vec{s_1}$ (shown in bold).    
\end{exmp}
\begin{figure}
    \centering
    \begin{tikzpicture}
     \fill[gray!20] (0.7,2.5) arc
    [
        start angle=180,
        end angle=270,
        x radius=1cm,
        y radius =1cm
    ] ;
        \fill[gray!20] (0.7,3)--(0.7, 2.5)--(1.7, 1.5)--(2.5, 1.5)--(2.5,0.5)--(3,0.5)--(3,3);
    \draw[<->] (-0.2,0)--(3,0);
    \node[below] at (3,0) {$R_1$};
    \draw[<->] (0, -0.2)--(0,3);
    \node[left] at (0,3) {$R_2$};
    \node at (2,2.2) {$P$};
    \draw[very thick] (0.7,2.5) arc
    [
        start angle=180,
        end angle=270,
        x radius=1cm,
        y radius =1cm
    ] ;
    \draw (0.7, 2.5)--(0.7, 3);
    \draw (1.7,1.5)--(2.5,1.5);
    \draw (2.5,1.5)--(2.5,0.5)--(3,0.5);
    \fill (2.5,0.5) circle (2pt);
    \fill (1.7,1.5) circle (2pt);
    \fill (0.7, 2.5) circle (2pt);
    \node[left] at (0.7, 2.5) {$\vec{s}_0$};
    \node[below] at (1.7,1.5) {$\vec{s}_1$};
    \node[left] at (2.5,0.5) {$\vec{s}_2$};
    \end{tikzpicture}
    \caption{The shaded area, $P$, supports a layer, $(P, S)$. The bold points indicate the locations of global layer points associated to $S$.}
    \label{fig:layer_points}
\end{figure}
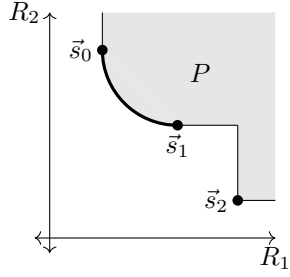

An antichain is a subset of a poset that contains no pairs of comparable elements. 
By definition, any two global layer points associated to the same cluster are incomparable.  So, the set of global layer points in a single layer form an antichain in $\Gamma(H)$. 
This defines a map from $\Gamma(H)$ to the collection of antichains in $\Gamma(H)$ that takes $(\vec{s}, S)$ to the set of global layer points in the layer containing $(\vec{s}, S)$.  
If any one of these global layer points is comparable to $(\vec{s}, S)$, then it is a maximal layer point below $(\vec{s}, S)$.  
That is, if $(\vec{r}, S)$ is a global layer point associated to $S$ and $(\vec{r}, S)$ is comparable to some $(\vec{s}, S)$, then $(\vec{r}, S) \leq (\vec{s}, S)$ and for any other layer point $(\vec{t}, T) \in \Lambda(H)$ comparable to $(\vec{s}, S)$, $(\vec{t}, T) \not > (\vec{r}, S)$.  
In the following section, we will show that there is at least one global layer point comparable to $(\vec{s}, S)$.  

For a single parameter clustering, the lower boundary of each layer is a point, so there is exactly one layer point corresponding to each layer.  Furthermore, this layer point is comparable to all points in the layer. Thus, there is a unique maximal layer point below any element of $\Gamma(H)$.   This defines a map
\[ m:\Gamma(H) \to \Lambda(H). \]
In Section~\ref{sec:layer_stab}, we will use this map to induce interleavings of layer points from interleavings of clusterings. 

\subsection{Slice Clusterings}

Single parameter clusterings can be extracted from a given multi-parameter clustering by taking one-dimensional slices. 
Recall that for any $\vec{v} \in \{-1,1\}^n$, $R^{\vec{v}} = \prod\limits_{i=1}^n R_{v_i}$ where \[R_{v_i} = \begin{cases} \RR & v_i = 1 \\ \RR^{op} & v_i=-1. \end{cases}\] 
For any $1 \leq i \leq n$, let $C_i = R_{v_1} \times \cdots \times R_{v_{i-1}} \times R_{v_{i+1}} \times \cdots \times R_n$.  
Let $p_i: R^{\vec{v}} \to C_i$ be the projection map, \begin{align*}
(r_1, \ldots, r_n) & \mapsto (r_1, \ldots, r_{i-1}, r_{i+1}, \ldots, r_n)
\end{align*} and let $\phi_i: R_{v_i} \times C_i \to R^{\vec{v}}$ be the insertion map, \begin{align*}
\big(t, (c_1, \ldots, c_{n-1})\big) & \mapsto (c_1, \ldots, c_{i-1}, t, c_{i}, \ldots, c_{n-1}).
\end{align*} 
By construction, these maps satisfy the properties $\phi_i(r_i, p_i \vec{r}) = \vec{r}$ for all $\vec{r} \in R^{\vec{v}}$ and $p_i(\phi_i(t, \vec{c}))=\vec{c}$ for all $(t, \vec{c}) \in R_{v_i} \times C_i$.  
\begin{defn}
For any $1 \leq i \leq n$ and any $\vec{w} \in C_i$ we define the $i^{th}$-parameter slice of $H$ at $\vec{w} \in C_i$ to be the hierarchical clustering
\[ H_{\vec{w}}^i : R_{v_i} \to C(Z) \]
that takes $s$ to $H(\phi_i(s, \vec{w}))$.\end{defn} 
Take $\Gamma_{\vec{w}}^i(H)$ to be the subset of $\Gamma(H)$ consisting of all $(\vec{s}, S) \in \Gamma(H)$ with $p_i(\vec{s})=\vec{w}$. 
There is an isomorphism of posets, \[ \Phi: \Gamma_{\vec{w}}^i(H)  \to \Gamma(H_{\vec{w}}^i)\]
induced by the projection of $R^{\vec{v}}$ onto $R_{v_i}$.

We can consider the layer points of the slice clusterings within the larger context of the multiparameter clustering. 
\begin{defn}
Let $H$ be a $\vec{v}$-hierarchical clustering of a set $Z$.  An element $(\vec{t}, T)$ of the poset $\Gamma(H)$ is a $i^{th}$-parameter layer point if it is the image of the inclusion
\[ \Lambda(H_{\vec{s}}^i) \to \Gamma(H_{\vec{s}}^i) \xrightarrow{\cong} \Gamma_{\vec{s}}^i(H) \to \Gamma(H)  \]
for some $\vec{s}$.  We denote the set of all $i^{th}$-parameter layer points by $\Lambda^i(H)$. 
\end{defn}

For any layer, $(P_i, S)$, in $\Gamma_{\vec{w}}^i(H)$, let $(P, S)$ be the layer associated to $S$ in $\Gamma(H)$.  Then $P_i = P \cap p_i^{-1}(\vec{w})$.  This means if  $(\vec{s}, S)$ is the (unique) $i^{th}$-parameter layer point associated to $S$ in $\Gamma_{\vec{w}}^i(H)$, then $\vec{s}$ lies on the lower boundary of $P$.   

Figure~\ref{fig:layer_points_slices} gives an example of $i^{th}$-parameter layer points.  Comparing with Figure~\ref{fig:layer_points}, we see that the global layer points associated to the layer supported by this region are the points that are both $1^{st}$- and $2^{nd}$-parameter layer points.  In fact, for any number of parameters, the global layer points can be found by taking the intersection of the $i^{th}$-parameter layer points for all $i$.

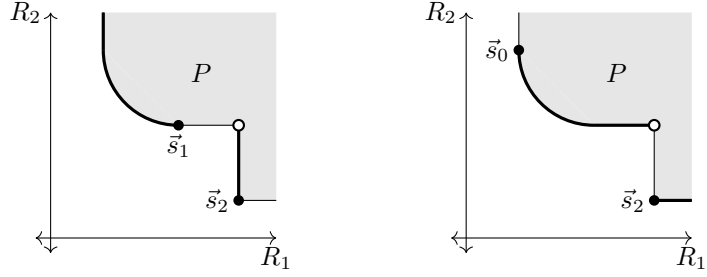
\begin{figure}
    \centering
    \phantom{.}\hfill\begin{tikzpicture}
     \fill[gray!20] (0.7,2.5) arc
    [
        start angle=180,
        end angle=270,
        x radius=1cm,
        y radius =1cm
    ] ;
        \fill[gray!20] (0.7,3)--(0.7, 2.5)--(1.7, 1.5)--(2.5, 1.5)--(2.5,0.5)--(3,0.5)--(3,3);
    \draw[<->] (-0.2,0)--(3,0);
    \node[below] at (3,0) {$R_1$};
    \draw[<->] (0, -0.2)--(0,3);
    \node[left] at (0,3) {$R_2$};
    \draw[very thick] (0.7,2.5) arc
    [
        start angle=180,
        end angle=270,
        x radius=1cm,
        y radius =1cm
    ] ;
    \draw[very thick] (0.7, 2.5)--(0.7, 3);
    \draw[very thick] (2.5,1.5)--(2.5,0.5);
    \draw (0.7, 2.5)--(0.7, 3);
    \draw (1.7,1.5)--(2.5,1.5);
    \draw (2.5,1.5)--(2.5,0.5)--(3,0.5);
    \fill (2.5,0.5) circle (2pt);
    \fill[white] (2.5, 1.5) circle (2pt);
    \draw[thick] (2.5, 1.5) circle (2pt);
    \fill (1.7,1.5) circle (2pt);
    \node[below] at (1.7,1.5) {$\vec{s}_1$};
    \node[left] at (2.5,0.5) {$\vec{s}_2$};
    \node at (2,2.2) {$P$};
    \end{tikzpicture} \hfill 
        \begin{tikzpicture}
     \fill[gray!20] (0.7,2.5) arc
    [
        start angle=180,
        end angle=270,
        x radius=1cm,
        y radius =1cm
    ] ;
        \fill[gray!20] (0.7,3)--(0.7, 2.5)--(1.7, 1.5)--(2.5, 1.5)--(2.5,0.5)--(3,0.5)--(3,3);
    \draw[<->] (-0.2,0)--(3,0);
    \node[below] at (3,0) {$R_1$};
    \draw[<->] (0, -0.2)--(0,3);
    \node[left] at (0,3) {$R_2$};
    \draw[very thick] (0.7,2.5) arc
    [
        start angle=180,
        end angle=270,
        x radius=1cm,
        y radius =1cm
    ] ;
    \draw[very thick] (1.7, 1.5)--(2.5,1.5);
    \draw (0.7, 2.5)--(0.7, 3);
    \draw (1.7,1.5)--(2.5,1.5);
    \draw (2.5,1.5)--(2.5,0.5)--(3,0.5);
    \fill (2.5,0.5) circle (2pt);
    \fill[white] (2.5, 1.5) circle (2pt);
    \draw[thick] (2.5,1.5) circle (2pt);
    \fill (0.7, 2.5) circle (2pt);
    \node[left] at (0.7, 2.5) {$\vec{s}_0$};
    \node[left] at (2.5,0.5) {$\vec{s}_2$};
    \draw[very thick] (2.5, 0.5)--(3,0.5);
    \node at (2,2.2) {$P$};
    \end{tikzpicture} \hfill \phantom{.}
    \caption{The shaded area supports a layer, $(P, S)$. The bold points in the left diagram indicate the locations of 1st-parameter layer points associated to $S$.  The bold points in the right diagram indicate the locations of 2nd-parameter layer points associated to $S$.}
    \label{fig:layer_points_slices}
\end{figure}

\begin{prop}
An element $(\vec{s}, S) \in \Gamma(H)$ is a global layer point if and only if it is an $i^{th}$-parameter layer point for all $i$.
\end{prop}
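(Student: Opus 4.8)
The plan is to translate both properties into explicit statements about the support $P$ of the layer $(P,S)$ of $\Gamma(H)$ containing $(\vec s, S)$, and then to compare those statements directly. Recall that $(\vec s, S)$ is a global layer point exactly when $D_{\{\vec s\}} \cap P = \{\vec s\}$; that is, when there is no $\vec t \in P$ with $\vec t < \vec s$. So the whole proof amounts to showing that ``$\vec s$ is minimal in $P$'' is equivalent to ``$\vec s$ is minimal in $P$ after fixing all but one coordinate, for each choice of the free coordinate.''

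First I would record the corresponding description of $i^{th}$-parameter layer points. Set $\vec w = p_i(\vec s)$; this is the only slice whose image into $\Gamma(H)$ can contain $(\vec s, S)$, and under the isomorphism $\Phi \colon \Gamma_{\vec w}^i(H) \to \Gamma(H_{\vec w}^i)$ the pair $(\vec s, S)$ corresponds to $(s_i, S)$. Hence $(\vec s, S)$ is an $i^{th}$-parameter layer point if and only if $(s_i, S)$ is a global layer point of the single-parameter clustering $H_{\vec w}^i$. Unwinding the two cases in the definition of a layer point, and using that $(\vec t, T) \leq (\vec s, S)$ forces $T \subseteq S$ so that $|T| < |S|$ is just $T \neq S$, the latter holds if and only if there is no $t \in R_{v_i}$ with $t < s_i$ and $S \in H_{\vec w}^i(t)$. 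Translating back through $\phi_i$, and noting that such a $t$ produces $\vec t := \phi_i(t, \vec w) < \vec s$ with $(\vec t, S) \sim (\vec s, S)$, hence $\vec t \in P$, I get: the point $(\vec s, S)$ is an $i^{th}$-parameter layer point if and only if there is no $\vec t \in P$ with $\vec t < \vec s$ and $p_i(\vec t) = p_i(\vec s)$.

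With both descriptions in hand, the forward implication is immediate: if no $\vec t \in P$ lies strictly below $\vec s$ at all, then in particular none does while also sharing all but the $i^{th}$ coordinate of $\vec s$, for every $i$. For the converse I would argue by contraposition. If $(\vec s, S)$ is not a global layer point, choose $\vec t \in P$ with $\vec t < \vec s$ and an index $i$ with $t_i < s_i$ in $R_{v_i}$, and form the ``corner'' point $\vec t'$ with $t_i' = t_i$ and $t_j' = s_j$ for $j \neq i$. Then $\vec t \leq \vec t' \leq \vec s$, so $\vec t' \in U_P \cap D_P = P$ by Proposition~\ref{prop:updown}, and $\vec t'$ witnesses that $(\vec s, S)$ fails to be an $i^{th}$-parameter layer point. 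Thus, if $(\vec s, S)$ is an $i^{th}$-parameter layer point for every $i$, it must be a global layer point.

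The one genuinely nontrivial step is the last: producing, from an arbitrary $\vec t \in P$ below $\vec s$, a comparable pair differing from $\vec s$ in only one coordinate and still lying in $P$. This is exactly where Proposition~\ref{prop:updown} (the $U_P \cap D_P$ characterization of layer supports) is essential; everything else is a matter of matching the slice bookkeeping (the isomorphism $\Phi$ and the identity $P_i = P \cap p_i^{-1}(\vec w)$) to the down-set description of global layer points.
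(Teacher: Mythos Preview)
Your proof is correct and rests on the same core construction as the paper's: both arguments pivot on the ``corner point'' $\phi_i(t_i, p_i\vec{s})$ lying between $\vec{t}$ and $\vec{s}$. The difference is in packaging. You first recast both layer-point conditions as minimality statements about the support $P$ of the layer, and then invoke Proposition~\ref{prop:updown} to place the corner point in $P$; the paper instead works directly with clusters and the definition, taking an arbitrary $(\vec{t},T) < (\vec{s},S)$, forming the cluster $\overline{T} \supseteq T$ at the corner point, and using the $i^{th}$-parameter layer-point hypothesis to force $|\overline{T}| < |S|$ and hence $|T| < |S|$. Your route is a little cleaner in that it isolates the convexity-type property of layer supports as a citable lemma and reduces everything to pure poset reasoning in $P$; the paper's route avoids the detour through supports but effectively reproves the needed instance of Proposition~\ref{prop:updown} inline.
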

\begin{proof}
Suppose $(\vec{s}, S)$ is a global layer point in $\Gamma(H)$.  We will show $(s_i, S)$ is a layer point for the $i^{th}$-parameter slice of $H$ at $p_i \vec{s}$.
For any $(t, T) \leq (s_i, S)$ in $\Gamma(H_{p_i \vec{s}}^i)$, \[ \phi_i (t, p_i\vec{s}) \leq \phi_i (s_i, p_i \vec{s}) = \vec{s} \]
and $T \in H_{p_i\vec{s}}^i (t) = H(\phi_i(t, p_i\vec{s}))$ with $T \subseteq S$, so 
\[  (\phi_i (t, p_i\vec{s}), T) \leq (\vec{s}, S).  \]
If $t \neq s_i$, then $\phi_i (t, p_i\vec{s}) \neq \vec{s}$, so $|T| < |S|$.  
Since this is true for any  $(t, T) < (s_i, S)$, the point $(s_i, S) \in \Gamma(H_{\vec{s}}^i)$ is a layer point.  
The image of $(s_i, S)$ under the inclusion of $\Lambda(H_{p_i\vec{s}}^i)$ into $\Gamma(H)$ is $\left(\phi_i (s_i, p_i \vec{s}), S \right)=(\vec{s}, S)$, so $(\vec{s}, S)$ is an $i^{th}$-parameter layer point. 

Now suppose $(\vec{s}, S)$ is an $i^{th}$-parameter layer point for all $i$.  For any $(\vec{t}, T) \leq (\vec{s}, S)$ with $\vec{t} \neq \vec{s}$, there is at least one $i$ such that $t_i < s_i$.  
Therefore,  \[ \vec{t} \leq \phi_i(t_i, p_i\vec{s}) < \vec{s}. \]
Let $\overline{T}$ be the cluster in $H(\phi_i(t_i, p_i\vec{s}))$ containing $T$.  Then, in $\Gamma(H_{p_i \vec{s}}^i)$, 
\[(t_i, \overline{T} ) < (s_i, S).\]
Since $(\vec{s}, S)$ is an $i^{th}$-parameter layer point, this implies $\lvert\overline{T}\rvert < |S|$, and consequently, $|T|<|S|$.  Hence, $(\vec{s}, S)$ is a global layer point. 
\end{proof}

Recall that the maximal global layer point below some $(\vec{s}, S) \in \Gamma(H)$ is not necessarily unique in the multiparameter case.  
Instead, the global layer points in the layer associated to $S$ form an antichain in $\Gamma(H)$.  
However, since the slice clusterings each have a single parameter, there is a unique maximal $i^{th}$-parameter layer point below $(\vec{s}, S)$.  This defines a map
\begin{align*}
    m_i:\Gamma(H) &\to \Lambda^i(H) 
\end{align*}
for each $i$ that takes $(\vec{s}, S)$ to the maximal $i^{th}$-parameter layer point below $(\vec{s}, S)$. 

These maps to $i^{th}$-parameter layer points are compatible in that the maximal $j^{th}$-parameter layer point below an $i^{th}$-parameter layer point is also an $i^{th}$-parameter layer point. 
\begin{lem}\label{lem:compose}
If $(\vec{s}, S) \in \Lambda^i(H)$ then $m_j (\vec{s}, S) \in \Lambda^i(H)$ for all $1 \leq j \leq n$.
\end{lem}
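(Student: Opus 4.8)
The plan is to reduce the statement to a single minimality assertion about the support of the layer containing $(\vec s,S)$, and then invoke Proposition~\ref{prop:updown}. The case $j=i$ is immediate, since $m_i$ already lands in $\Lambda^i(H)$ by definition, so I would assume $j\neq i$ throughout. Write $(P,S)$ for the layer of $\Gamma(H)$ containing $(\vec s,S)$. Unwinding the definition of $m_j$ (which is computed inside the single-parameter slice $H^j_{p_j\vec s}$, whose layers have constant cluster), we get $m_j(\vec s,S)=(\vec r,S)$ where $\vec r=\phi_j(r_j,p_j\vec s)$ agrees with $\vec s$ in every coordinate except possibly the $j$-th, $\vec r\leq\vec s$, $\vec r\in P$, and $r_j$ is the minimum of $\{u\in R_{v_j}\mid\phi_j(u,p_j\vec s)\in P\}$. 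Since $(\vec r,S)\leq(\vec s,S)$ and both carry the cluster $S$, they lie in the same layer $P$; and because $j\neq i$, $\vec r$ still has $i$-th coordinate $s_i$.

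Next I would translate ``$(\vec r,S)\in\Lambda^i(H)$'' into a statement about $P$. Since $\vec r\in P$, the slice-layer through $(\vec r,S)$ in $\Gamma^i_{p_i\vec r}(H)$ has support $P\cap p_i^{-1}(p_i\vec r)$ (using the identity $P_i=P\cap p_i^{-1}(\vec w)$ recorded before the lemma), so, by the characterization of single-parameter layer points as the minima of their layer supports, $(\vec r,S)\in\Lambda^i(H)$ is equivalent to $s_i$ being the minimum of $\{u\in R_{v_i}\mid\phi_i(u,p_i\vec r)\in P\}$. Likewise $(\vec s,S)\in\Lambda^i(H)$ says exactly that $s_i$ is the minimum of $\{u\in R_{v_i}\mid\phi_i(u,p_i\vec s)\in P\}$, which we are given.

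The heart of the argument is to rule out a smaller element in the first set. Suppose $u<s_i$ in $R_{v_i}$ with $\vec t:=\phi_i(u,p_i\vec r)\in P$; note that $\vec t$ agrees with $\vec s$ except in coordinate $i$ (value $u$) and coordinate $j$ (value $r_j$). Set $\vec t':=\phi_i(u,p_i\vec s)$, the point agreeing with $\vec s$ except in coordinate $i$, where it equals $u$. Then $\vec t\leq\vec t'$ (they agree off coordinate $j$, and there $r_j\leq s_j$), so $\vec t'\in U_P$ since $\vec t\in P$; and $\vec t'\leq\vec s$, so $\vec t'\in D_P$ since $\vec s\in P$. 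By Proposition~\ref{prop:updown}, $P=U_P\cap D_P$, hence $\vec t'=\phi_i(u,p_i\vec s)\in P$. But then $u<s_i$ contradicts $(\vec s,S)\in\Lambda^i(H)$. Therefore no such $u$ exists, and $m_j(\vec s,S)=(\vec r,S)\in\Lambda^i(H)$.

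I expect the main obstacle to be bookkeeping rather than substance: correctly composing the projection and insertion maps $p_i,p_j,\phi_i,\phi_j$ when $i\neq j$, and confirming that ``moving down in the $j$-th coordinate'' keeps us inside the single layer $P$ so that Proposition~\ref{prop:updown} applies. Once the identity $P_i=P\cap p_i^{-1}(\vec w)$ and the description of single-parameter layer points as support-minima are in hand, the rest is the two short poset comparisons above.
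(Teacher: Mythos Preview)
Your proof is correct and follows essentially the same square argument as the paper: both construct the fourth corner point (your $\vec{t}'$, the paper's $\vec{q}$) from the three points $\vec{s}$, $m_j(\vec{s},S)$, and a hypothetical smaller point in the $i$-direction, and then use the $i$-th-parameter minimality at $\vec{s}$ to force a contradiction. The only difference is packaging: you phrase everything in terms of the layer support $P$ and invoke Proposition~\ref{prop:updown} to place $\vec{t}'$ in $P$, whereas the paper tracks clusters directly through the square (showing the cluster $Q$ at $\vec{q}$ must equal $S$) without naming $P$ or citing that proposition.
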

\begin{proof}
Let $(\vec{t}, S) := m_j(\vec{s}, S)$. 
For any $\vec{w} \leq \vec{t}$ in $R^{\vec{v}}$ with $p_i \vec{w} = p_i \vec{t}$, we define $\vec{q}=\phi_i(w_i, p_i \vec{s})$.  (The positions of these four points in $R^{\vec{v}}$ are depicted in Figure~\ref{fig:square}.) Then we have
\begin{align*}
    p_j \vec{q} &= p_j\vec{w} \\
    q_j = s_j &\geq t_j = w_j \\
    p_i\vec{q} &= p_i \vec{s} \\
    q_i = w_i &\leq t_i = s_i.
\end{align*}
For any $W \in H(\vec{w})$ with $W \subseteq S$, let $Q$ be the cluster in $H(\vec{q})$ containing $W$.  

Suppose $W = S$.  Then $S \subseteq Q$.  Since $(\vec{s}, S)$ is an $i^{th}$-parameter layer point, $p_i\vec{s}=p_i\vec{q}$, and $q_i \leq s_i$, this implies $(\vec{q}, Q) = (\vec{s}, S)$.  So, $w_i = q_i = s_i = t_i$.  Since $p_i \vec{w} = p_i \vec{t}$, $(\vec{w}, W)=(\vec{t}, S)$.  

Hence, if $(\vec{w}, W) \in \Gamma_{p_i \vec{t}} \ (H)$ and $(\vec{w}, W) < (\vec{t}, S)$, then $W \neq S$.
That is, $m_j(\vec{s}, S)=(\vec{t}, S)$ is an $i^{th}$-parameter layer point. 
\end{proof}
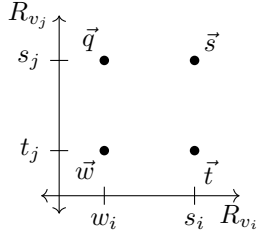
\begin{figure}[h]
    \centering
    \begin{tikzpicture}[scale=0.6]
         \draw[<->] (-0.4, 0)--(4,0);
         \draw[<->] (0, -0.4)--(0,4);
         \node[below] at (4,0) {$R_{v_i}$};
         \node[left] at (0,4) {$R_{v_j}$};
         \fill (1,1) circle (3pt);
         \node[below left] at (1,1) {$\vec{w}$};
         \fill (3,1) circle (3pt);
         \node[below right] at (3,1) {$\vec{t}$};
         \fill (1,3) circle (3pt);
         \node[above left] at (1,3) {$\vec{q}$};
         \fill (3,3) circle (3pt);
         \node[above right] at (3,3) {$\vec{s}$};
         \draw (-0.2, 1)--(.2,1);
         \node[left] at (-.2, 1) {$t_j$};
         \draw (-0.2, 3)--(.2,3);
         \node[left] at (-.2, 3) {$s_j$};
         \draw (1,-.2)--(1,.2);
         \node[below] at (1,-.2) {$w_i$};
         \draw (3,-.2)--(3,.2);
         \node[below] at (3,-.2) {$s_i$};
    \end{tikzpicture}
    \caption{The points $\vec{s}$, $\vec{t}$, $\vec{w}$, and $\vec{q}$ form a square in the 2-parameter slice of $R^{\vec{v}}$, $R_{v_i} \times R_{{v_j}}$.}
    \label{fig:square}
\end{figure}

Composing the maps $m_i$ for all $i$ therefore gives a retract
\[ m: \Gamma(H) \to \Lambda(H). \]
However, the composition of a given $m_i$ and $m_j$ is not necessarily commutative, so this map depends on a choice of order.  Additionally, even when the order of the maps is fixed, the global layer point selected is not necessarily the same for two elements in the same layer.

\subsection{Branch Points}
For a single parameter clustering, the layer set contains the branch points, as defined in \cite{Branches}.  Heuristically, branch points indicate where clusters originate or combine, but not where clusters grow in size without merging with another cluster. 
The branch points can been seen as indicating the births and deaths of persistent clusters (where persistent clusters are defined as in \cite{RS}).  

\begin{defn}
Let $H$ be a single parameter hierarchical clustering of a set $Z$.  An element $(s, S)$ of the poset $\Gamma(H)$ is a branch point if it satisfies one of the following conditions:
\begin{enumerate}[label=\roman*.)]
    \item there is no $(t, T) \in \Gamma(H)$ with $(t, T) < (s, S)$, or
    \item there exist distinct $(t, T_0)$ and $(t, T_1)$ in $\Gamma(H)$ such that $(t, T_0) \leq (r, R)$ and $(t, T_1) \leq (r, R)$ if and only if $(r, R) \geq (s, S)$.
\end{enumerate}
\end{defn}

For a $\vec{v}$-hierarchical clustering, $H$, $i^{th}$-parameter branch points are defined as the branch points of the slice hierarchical clusterings, in the same way as $i^{th}$-parameter layer points were defined as the layer points of the slice clusterings.  These appear in \cite{Layers} as vertical and horizontal branch points.

Unlike with layer points, an element of $\Gamma(H)$ does not need to be an $i^{th}$ parameter branch point for all $i$ to satisfy a generalized version of Definition 6.  
This results in some ambiguity with regard to what a global branch point should be, but we propose the following definition.  
\begin{defn}
An element of $\Gamma(H)$ is a global branch point if it is an $i^{th}$-parameter branch point for all $i$.
\end{defn}

Unfortunately, the analogue of Lemma~\ref{lem:compose} does not hold for $i^{th}$-parameter branch points, so composing maps to slice branch points will not necessarily yield a global branch point. We do not focus on the theory of branch points here, but we will utilize them in Section~\ref{sec:layer_stab} where the layer points of a particular hierarchical clustering are all branch points.

\subsection{Interleavings}
\label{sec:interleavings}
Suppose $Z$ is a set, $f$ is a self-map on $Z$, and $H$ and $E$ are $\vec{v}$-hierarchical clusterings of $Z$.  
As in \cite{RS}, we put $\vec{s}+\vec{v}\vec{\epsilon}$ to mean $(s_1+v_1 \epsilon_1, \ldots, s_n + v_n\epsilon_n)$ for any $\vec{\epsilon} \in \RR_{\geq 0}^n$. 
If, there exists, for all $S \in H(\vec{s})$, some $T \in E(\vec{s}+\vec{v}\vec{\epsilon})$ such that $f(S) \subseteq T$, then there exists an induced map \[f_{\scaleto{\vec{\epsilon}}{7pt}}:H(\vec{s}) \to E(\vec{s}+\vec{v}\vec{\epsilon})\] that takes a cluster $S$ to the cluster containing $f(S)$, which we denote by $\overline{f(S)}$.  When $f_{\scaleto{\vec{\epsilon}}{7pt}}$ exists, it is natural in $\vec{s}$.

We say there is an $(\vec{\epsilon}, \vec{\delta})$-interleaving of the pair $(H, E)$ given by $(f,g)$ if, for every $\vec{s} \in R^{\vec{v}}$, all maps in the following diagrams exist and the diagrams commute for all $\vec{s}$:
\[\adjustbox{scale=0.99}{\begin{tikzcd}[column sep=tiny, scale=0.5]
H(\vec{s}) \ar[rr, "{(id_Y)_{\scaleto{\vec{\epsilon}+\vec{\delta}}{9pt}}}"] \ar[dr, "{f_{\scaleto{\vec{\epsilon}}{6pt}}}", swap] 
& & H(\vec{s}+\vec{v}(\vec{\epsilon}+\vec{\delta}))   \\
& E(\vec{s}+\vec{v}\vec{\epsilon}) \ar[ur,"{g_{\scaleto{\vec{\delta}}{7pt}}}", swap]  & 
\end{tikzcd}  \begin{tikzcd}[column sep=tiny, scale=0.5]
 & H(\vec{s}+\vec{v}\vec{\delta}) \ar[dr, "{f_{\scaleto{\vec{\epsilon}}{6pt}}}"] & \\
 E(\vec{s}) \ar[ur,"{g_{\scaleto{\vec{\delta}}{7pt}}}"] \ar[rr, "{(id_Y)_{\scaleto{\vec{\epsilon}+\vec{\delta}}{9pt}}}", swap] & & E(\vec{s}+\vec{v}(\vec{\epsilon}+{\delta})).
\end{tikzcd}}\]  
This is a special case of the interleavings of generalized persistence modules in \cite{BDS}.
  In the case where $\vec{\epsilon}=\vec{\delta}$ and $f=g=id_Y$, this coincides with the notion of an $\vec{\epsilon}$-interleaving in \cite{RS}.   

An $(\vec{\epsilon}, \vec{\delta})$-interleaving of $(H, E)$ induces a commutative diagram of posets,
\[\begin{tikzcd}[column sep=large]
\Gamma(H) \ar[d, "{f_{\scaleto{\vec{\epsilon}}{6pt}}}", swap] \ar[r, "{(id_Z)_{\scaleto{\vec{\epsilon}+\vec{\delta}}{9pt}}}"] & \Gamma(H)\ar[d, "{f_{\scaleto{\vec{\epsilon}}{6pt}}}"] \\
\Gamma(E) \ar[r, "{( id_Z)_{\scaleto{\vec{\epsilon}+\vec{\delta}}{9pt}}}", swap] \ar[ur, "{g_{\scaleto{\vec{\delta}}{7pt}}}"] & \Gamma(E)
\end{tikzcd}\]
where 
\begin{align*}
    f_{\scaleto{\vec{\epsilon}}{6pt}}(\vec{s}, S) &= \left( \vec{s}+\vec{v}\vec{\epsilon}, \ \overline{f(S)}\right) \\
    g_{\scaleto{\vec{\delta}}{7pt}}(\vec{s}, S) &= \left( \vec{s}+\vec{v}\vec{\delta}, \ \overline{g(S)}\right) \\
    (id_Z)_{\scaleto{\vec{\epsilon}+\vec{\delta}}{9pt}}(\vec{s}, S) &= \left(\vec{s}+\vec{v}\left(\vec{\epsilon} + \vec{\delta}\right), \ \overline{S}\right).
\end{align*}
In general, interleavings of $(H,E)$ do not induce interleavings of the slice clusterings, $(H_{\vec{s}}^i, E_{\vec{s}}^i)$, since the maps in the interleaving may move between slices. 
However, if $\vec{\epsilon}=\phi_i(\epsilon, \vec{0})$ and $\vec{\delta}=\phi_i(\delta, \vec{0})$ for some $i$, then the maps restrict to $i^{th}$-directional slices.  In that case, there is an $(\vec{\epsilon}, \vec{\delta})$-interleaving of $(H, E)$ given by $(f,g)$ if and only if there is an $(\epsilon, \delta)$-interleaving of $(H_{\vec{s}}^i, E_{\vec{s}}^i)$ given by $(f,g)$ for all $\vec{s} \in C_i$.

An interleaving of $\vec{v}$-hierarchical clusterings induces a homotopy interleaving of layer points.  In the multi-parameter case, this depends on a choice of maps $m_H$ and $m_E$.
\begin{theorem}\label{thm:stability1}
An $(\vec{\epsilon}, \vec{\delta})$-interleaving of a pair of $\vec{v}$-hierarchical clusterings, $(H, E)$, given by $(f,g)$ induces a homotopy commutative diagram, 
\[  \begin{tikzcd}[column sep = large]
\Lambda(H) \ar[d, "f_{\scaleto{\vec{\epsilon}}{6pt}}", swap] \ar[r, "(id_Z)_{\scaleto{\vec{\epsilon}+\vec{\delta}}{9pt}}"] & \Lambda(H)\ar[d, "f_{\scaleto{\vec{\epsilon}}{6pt}}"]\\
\Lambda(E) \ar[r, "(id_Z)_{\scaleto{\vec{\epsilon}+\vec{\delta}}{9pt}}", swap] \ar[ur, "g_{\scaleto{\vec{\delta}}{6pt}}"] & \Lambda(E).\end{tikzcd}\]
\end{theorem}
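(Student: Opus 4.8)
The plan is to build the maps of the claimed $\Lambda$-diagram out of the commuting diagram on $\Gamma$ that the interleaving already induces, by pre- and post-composing with the inclusions $\Lambda\hookrightarrow\Gamma$ and the retractions $m$; the resulting diagram of posets will not commute on the nose, and I would then upgrade it to a homotopy commutative one using the standard poset-topology fact that a pointwise inequality between order-preserving maps realizes to a homotopy of the induced maps on order complexes.

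Concretely, first fix choices of retractions $m_H:\Gamma(H)\to\Lambda(H)$ and $m_E:\Gamma(E)\to\Lambda(E)$ as constructed above (using Lemma~\ref{lem:compose}), and write $\iota_H,\iota_E$ for the inclusions of the sub-posets of layer points. Then \emph{define} the maps of the theorem's diagram by conjugation: the left vertical $f_{\vec\epsilon}$ is $m_E\circ f_{\vec\epsilon}\circ\iota_H$, the top horizontal $(id_Z)_{\vec\epsilon+\vec\delta}$ is $m_H\circ(id_Z)_{\vec\epsilon+\vec\delta}\circ\iota_H$, the diagonal $g_{\vec\delta}$ is $m_H\circ g_{\vec\delta}\circ\iota_E$, and the remaining two maps analogously. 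Each of these is a composite of order-preserving maps (inclusions of sub-posets and the components of the induced $\Gamma$-diagram are order preserving, and $m_H,m_E$ are order preserving since they are retractions of posets), so after applying the order-complex functor one obtains an honest diagram of spaces; the theorem asserts this diagram homotopy commutes.

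The heart of the argument is the two triangles. For $\lambda\in\Lambda(H)$, the retraction $m_E$ is pointwise below the identity, so $m_E(f_{\vec\epsilon}\iota_H\lambda)\le f_{\vec\epsilon}(\iota_H\lambda)$ in $\Gamma(E)$; applying the order-preserving map $g_{\vec\delta}$ and then using commutativity of the upper triangle of the induced $\Gamma$-diagram gives
\[ g_{\vec\delta}\bigl(m_E(f_{\vec\epsilon}\iota_H\lambda)\bigr)\ \le\ g_{\vec\delta}\bigl(f_{\vec\epsilon}(\iota_H\lambda)\bigr)\ =\ (id_Z)_{\vec\epsilon+\vec\delta}(\iota_H\lambda)\qquad\text{in }\Gamma(H), \]
and applying $m_H$ shows the diagonal-after-left-vertical composite $\Lambda(H)\to\Lambda(H)$ lies pointwise below the top-horizontal composite. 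Symmetrically, for $\mu\in\Lambda(E)$ the inequality $m_H(g_{\vec\delta}\iota_E\mu)\le g_{\vec\delta}(\iota_E\mu)$ together with commutativity of the lower triangle of the $\Gamma$-diagram shows the right-vertical-after-diagonal composite $\Lambda(E)\to\Lambda(E)$ lies pointwise below the bottom-horizontal composite. In each case the two maps being compared are order-preserving endomorphisms of one poset that are comparable pointwise, hence realize to homotopic maps, so both triangles commute up to homotopy. The outer square then also commutes up to homotopy, since $f_{\vec\epsilon}\circ(id_Z)_{\vec\epsilon+\vec\delta}\simeq f_{\vec\epsilon}\circ g_{\vec\delta}\circ f_{\vec\epsilon}\simeq(id_Z)_{\vec\epsilon+\vec\delta}\circ f_{\vec\epsilon}$ by the two triangles just established.

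The only non-bookkeeping ingredient is the poset-topology lemma: an inequality $\alpha\le\beta$ of order-preserving maps $P\to Q$ determines an order-preserving map $P\times\{0<1\}\to Q$, whose realization is a homotopy $|P|\times[0,1]\to|Q|$ from $|\alpha|$ to $|\beta|$. Everything else is careful tracking of the commuting $\Gamma$-diagram together with the built-in inequality $m(x)\le x$. The two points to be careful about are that $m_H$ and $m_E$ are genuinely order preserving (which is precisely the assertion, established above, that they are retractions in the category of posets, resting on Lemma~\ref{lem:compose}) and that the relevant triangles of the $\Gamma$-diagram commute strictly—which they do, being the image under $\Gamma(-)$ of the commuting interleaving triangles. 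I expect this bookkeeping to be the bulk of the written proof, with the homotopy lemma the single conceptual step.
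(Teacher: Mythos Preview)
Your proposal is correct and follows essentially the same route as the paper: sandwich the commuting $\Gamma$-diagram between the inclusions $\iota_H,\iota_E$ and the retractions $m_H,m_E$, then use the pointwise inequality $\iota\circ m\le\mathrm{id}$ (together with the standard fact that comparable poset maps realize to homotopic maps) to obtain homotopy commutativity of the collapsed square. Your write-up is more explicit than the paper's terse argument about the chain of inequalities and about invoking the poset-homotopy lemma, but the architecture is identical.
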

\begin{proof}
Recall there exists a (non-unique) map to maximal layer points, \begin{align*}m_H:& \Gamma(H) \to \Lambda(H).\end{align*}
Let $i_H: \Lambda(H) \to \Gamma(H)$ be the inclusion.  Then 
\begin{align*}
    m_H \circ i_H = id_{\Lambda(H)} \text{ and,} \\
    i_H \circ m_H \leq id_{\Lambda(H)}. 
\end{align*}
The same properties hold for the retract $m_E$ and inclusion $i_E$.
 An $(\vec{\epsilon}, \vec{\delta})$-interleaving of $(H, E)$ given by $(f,g)$ therefore defines a diagram
 \[\begin{tikzcd}[column sep=large]
  \Lambda(H) \ar[dr] & &  \Lambda(H) & \\
& \Gamma(H) \ar[d, "{f_{\scaleto{\vec{\epsilon}}{6pt}}}", swap] \ar[r, "{(id_Z)_{\scaleto{\vec{\epsilon}+\vec{\delta}}{9pt}}}"] & \Gamma(H)\ar[d, "{f_{\scaleto{\vec{\epsilon}}{6pt}}}"] \ar[u, "m_H", swap] & \Lambda(H) \ar[l, "i_H"] \ar[ul, equal] \\
\Lambda(E) & \Gamma(E) \ar[r, "{( id_Z)_{\scaleto{\vec{\epsilon}+\vec{\delta}}{9pt}}}", swap] \ar[ur, "{g_{\scaleto{\vec{\delta}}{7pt}}}"] \ar[l, "m_E"]& \Gamma(E) \ar[rd, "m_E"] &  \\
& \Lambda(E) \ar[u, "i_E"] \ar[ul, equal] &  & \Lambda(E)
\end{tikzcd}\]
where the center squares commutes and the outer triangles commute up to a homotopy of poset maps. 
We collapse this diagram to obtain a homotopy commutative diagram,
\[  \begin{tikzcd}[column sep = large]
\Lambda(H) \ar[d, "f_{\scaleto{\vec{\epsilon}}{6pt}}", swap] \ar[r, "(id_Z)_{\scaleto{\vec{\epsilon}+\vec{\delta}}{9pt}}"] & \Lambda(H)\ar[d, "f_{\scaleto{\vec{\epsilon}}{6pt}}"] \\
\Lambda(E) \ar[r, "(id_Z)_{\scaleto{\vec{\epsilon}+\vec{\delta}}{9pt}}", swap] \ar[ur, "g_{\scaleto{\vec{\delta}}{6pt}}"] & \Lambda(E)
\end{tikzcd}\]
where, for simplicity, we omit the compositions with $i$ and $m$ in the map notation. (So, for example we put $f_{\scaleto{\vec{\epsilon}}{7pt}}$ rather than $m_E \circ f_{\scaleto{\vec{\epsilon}}{7pt}} \circ i_H$.)  \end{proof}

For branch points, there is a unique maximal branch point below any element of $\Gamma(H)$ when $H$ has a single parameter.  So, an interleaving of single parameter hierarchical clusterings also induces a homotopy interleaving of branch points. 
However, in the multi-parameter case, the situation is even worse than it is for layer points, since we have not defined a map from $\Gamma(H)$ to the branch points of $H$.  
In the remaining sections, we primarily focus on interleavings of layer points for single parameter hierarchical clusterings. 

\subsection{Approximations of Clusterings}
Let $X \subseteq Y$ be finite metric spaces.  The directional Hausdorff distance, $h$, from $Y$ to $X$ is given by 
\[  h(Y, X) = \sup_{y \in Y} \  \inf_{x \in X} d(y, x). \]

Let $H$ be a $\vec{v}$-hierarchical clustering of $X$ and let $E$ be a $\vec{v}$-hierarchical clustering of $Y$. Since $X \subseteq Y$, $H$ is also a $\vec{v}$-hierarchical clustering of $Y$. 

\begin{defn}
We say $(X, H)$ is an $(\vec{\epsilon}, \vec{\delta})$-approximation of $(Y, E)$
if there exists an $(\vec{\epsilon}, \vec{\delta})$-interleaving of the pair $(H, E)$ given by $(i, \theta)$ where $i$ is the inclusion of $X$ into $Y$ and $\theta: Y \to X$ is some map satisfying $d(y, \theta(y)) \leq h(Y,X)$.  
\end{defn}
Note that the existence of $i_{\vec{\epsilon}}$ in the $(\vec{\epsilon}, \vec{\delta})$-interleaving of $(H,E)$ is equivalent to requiring that each cluster in $H(\vec{s})$ is contained in a cluster of $E(\vec{s}+\vec{\epsilon})$.  
An $(\vec{\epsilon}, \vec{\delta})$-approximation of $(Y, E)$ by $(X, H)$ gives a homotopy commutative interleaving diagram of layer points, 
\[\begin{tikzcd}[column sep = large]
\Lambda(H) \ar[r, "(id_Y)_{\scaleto{\vec{\epsilon}+ \vec{\delta}}{8pt}}"] \ar[d, "i_{\scaleto{\vec{\epsilon}}{6pt}}", swap] & \Lambda(H) \ar[d, "i_{\scaleto{\vec{\epsilon}}{6pt}}"] \\
\Lambda(E) \ar[r, "(id_Y)_{\scaleto{\vec{\epsilon}+ \vec{\delta}}{8pt}}", swap] \ar[ur, "\theta_{\scaleto{\vec{\delta}}{7pt}}"] & \Lambda(E)
\end{tikzcd}\]
as in the previous section. This relies on the fact that $H$ and $E$ are both $\vec{v}$-clusterings of $Y$.
\section{Layer Stability for Lesnick Clusterings}
\label{sec:layer_stab}
\subsection{Approximating Lesnick Clusterings} In the remaining sections, we focus on the $\vec{v}$-hierarchical clusterings given by taking path components of Lesnick complexes, which are subcomplexes of the Vietoris-Rips complex. 

\begin{defn}
The Vietoris-Rips complex, $V_s(Z)$, is the simplicial complex with $n$-simplices of the form $[z_0, z_1, \ldots, z_n]$ for any points $z_i \in Z$ with $d(z_i, z_j)$ for all $0 \leq i, j \leq n$. \end{defn}

An $s$-neighbour of a point $z \in Z$ is a point $x \in Z$ with $x \neq z$ and $d(x,z) \leq s$.  The Lesnick complexes, or degree Rips complexes, $L_{s,k}(Z)$ filter the Vietoris-Rips complex, $V_s(Z)$, according to density, as measured by number of neighbours.  
\begin{defn} The Lesnick complex, $L_{s,k}(Z)$
has as $n$-simplices the simplices $[z_0, z_1, \ldots, z_n] \in \left(V_s(Z)\right)_n$ where each $z_i$ has at least $k$ distinct $s$-neighbours.
\end{defn}
For all $s$, note $L_{s,0}(Z)=V_{s}(Z)$.  
\begin{defn}
Let $ L:\RR \times \RR^{op} \to C(Z)$ be the map
\[ L(s,t) =  \begin{cases}  \pi_0 L_{s, \lceil t\rceil }(Z) & (s,t) \in [0, \infty) \times [0, \infty)^{op} \\ \emptyset & \text{otherwise.}   \end{cases} \]
\end{defn}

We choose to take $\lceil t \rceil$ to obtain an integer number of neighbours rather than $\lfloor t \rfloor$ in order to ensure that that $L$ is closed below.
 The second parameter slice $L_k^2$ will be denoted hereafter by $L_k Z$.  
   
We note that each cluster $S$ in $L_k Z(s)$ for some $s \in \RR_{\geq 0}$ is the path component of some $z \in \big(L_{s,k}(Z)\big)_0 \subseteq Z$.  For any $(t, T) \in \Gamma(Z; L_k Z)$ with $(s, S) \leq (t, T)$, $T$ is the path component of $z$ in $L_{t,k}(Z)\supseteq L_{s,k}(Z)$.  We put $S= [z]_s$ and $T= [z]_t$, unless the location of $[z]$ is clear from context. 

As in \cite{DHT}, we use the following notation for the $n^{th}$ unordered configuration space of $Y$.
\begin{defn}
For any positive integer $n$, let $Y_{dis}^n$ be the set of all subsets of $Y$ consisting of exactly $n$ distinct elements. 
\end{defn}
Using $Y_{dis}^{k+1}$, we can describe the density of $Y$ around the points of $X$.
\begin{defn}\label{defn:nbs}
For any nonnegative integer, $k$, let 
\[ N_k(X, Y) = \sup_{x \in X} \  \inf_{S \in Y_{dis}^{k+1}} \ \sup_{y \in S} \ d(x,y). \]
\end{defn}
Then $N_k(X,Y) \leq p$ if each $x \in X$ has at least $k$ distinct $p$-neighbours in $Y$.
If $N_k(X, Y) \leq p$ then $N_\ell (X, Y) \leq q$ for all $\ell \leq k$ and $q \geq p$.

\begin{defn}
Let $L_0 X[c]$ be the hierarchical clustering of $X$ given by
\[ L_0 X[c] (s) = \begin{cases}
\pi_0 L_{s,0}(X) & s \geq c \\ \emptyset & \text{otherwise}
\end{cases} = \begin{cases}
\pi_0 V_s (X) & s \geq c \\ \emptyset & \text{otherwise.}
\end{cases}\]
\end{defn}
For any $c \geq 0$, the poset, $\Gamma(L_0 X[c])$ is a subset of $\Gamma(\L_0 X)$, although $\Lambda(L_0 X[c])$ is not necessarily a subset of $\Lambda(L_0 X)$.  All layer points of $L_0 X = \pi_0 V_\bullet (X)$ are branch points and $L_0 X[c]$ inherits this property.

This truncated clustering can be used, under certain conditions, to approximate the clustering $L_k Y$.
\begin{lem}\label{lem:posets} Suppose $c \geq 0$. Then $(X, L_0 X[c])$ is an $(\epsilon, \delta)$-approximation for $(Y, L_k Y)$ for all $\epsilon, \delta \geq 0$ satisfying
 \[N_k(X, Y) - \epsilon \leq c \leq \delta,\] and \[\delta \geq 2h(Y,X).\]
\end{lem}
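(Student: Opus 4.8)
The plan is to construct the required $(\epsilon,\delta)$-interleaving of $(L_0 X[c], L_k Y)$ explicitly, by defining the two cluster maps and checking the two interleaving triangles; here both clusterings are single‑parameter (covariant in $s$), so $\vec{s}+\vec{v}\vec{\epsilon}=s+\epsilon$. We may assume $X$ is nonempty and $|Y|\geq k+1$, since otherwise $h(Y,X)$ or $N_k(X,Y)$ is infinite and the hypotheses are vacuous. Fix once and for all a map $\theta\colon Y\to X$ sending each $y$ to a nearest point of $X$, so $d(y,\theta(y))=\inf_{x\in X}d(y,x)\leq h(Y,X)$; this uses that $X$ is finite and nonempty. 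By the remark following the definition of interleaving, each cluster map is automatically natural in $s$ once it is shown to exist, so it suffices to work at a fixed parameter.

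First I would show that the inclusion induces $i_{\epsilon}\colon L_0 X[c](s)\to L_k Y(s+\epsilon)$. The only case is $s\geq c$. The key point is that \emph{every} $x\in X$ is a vertex of $L_{s+\epsilon,k}(Y)$: since $Y$ is finite, the infimum in Definition~\ref{defn:nbs} is attained, so $x$ has at least $k$ distinct neighbours in $Y$ within distance $N_k(X,Y)$, and $N_k(X,Y)\leq c+\epsilon\leq s+\epsilon$ by the hypothesis $N_k(X,Y)-\epsilon\leq c$ together with $s\geq c$. Since moreover $V_s(X)\subseteq V_{s+\epsilon}(Y)$, every edge of a path component $S$ of $V_s(X)$ is an edge of $L_{s+\epsilon,k}(Y)$, so $S$ lies in a unique path component of $L_{s+\epsilon,k}(Y)$, which we take to be $i_{\epsilon}(S)$. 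Dually, I would show $\theta$ induces $\theta_{\delta}\colon L_k Y(s)\to L_0 X[c](s+\delta)$: we may assume $s\geq 0$ (else $L_k Y(s)=\emptyset$), and then $s+\delta\geq\delta\geq c$, so the target is $\pi_0 V_{s+\delta}(X)$, whose vertex set is all of $X$ with no density constraint. For a path component $T$ of $L_{s,k}(Y)$ and an edge $[y,y']$ of $T$, the triangle inequality gives $d(\theta(y),\theta(y'))\leq 2h(Y,X)+s\leq s+\delta$, so $[\theta(y),\theta(y')]$ is an edge of $V_{s+\delta}(X)$; thus $\theta(T)$ is connected there and lies in a unique path component, which we take to be $\theta_{\delta}(T)$. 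The two identity-induced maps $(id_Y)_{\epsilon+\delta}$ exist trivially.

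It then remains to check the two triangles. For the upper triangle $\theta_{\delta}i_{\epsilon}=(id_Y)_{\epsilon+\delta}$ on $L_0 X[c]$: given a path component $S$ of $V_s(X)$ with $s\geq c$, each $x\in S$ satisfies $d(x,\theta(x))\leq h(Y,X)\leq\delta\leq s+\epsilon+\delta$, so $x$ and $\theta(x)$ lie in the same path component of $V_{s+\epsilon+\delta}(X)$; since $S\subseteq i_{\epsilon}(S)$ and $\theta(i_{\epsilon}(S))$ is connected in $V_{s+\epsilon+\delta}(X)$ by the construction above, the component $\theta_{\delta}i_{\epsilon}(S)$ contains the nonempty set $\theta(S)$ and hence coincides with the component $\overline{S}$ of $V_{s+\epsilon+\delta}(X)$ containing $S$. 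For the lower triangle $i_{\epsilon}\theta_{\delta}=(id_Y)_{\epsilon+\delta}$ on $L_k Y$: given a path component $T$ of $L_{s,k}(Y)$ with $s\geq 0$, each $y\in T$ has both $y$ and $\theta(y)$ as vertices of $L_{s+\epsilon+\delta,k}(Y)$ (for $\theta(y)$ this uses $N_k(X,Y)\leq c+\epsilon\leq\delta+\epsilon\leq s+\epsilon+\delta$) and $d(y,\theta(y))\leq h(Y,X)\leq s+\epsilon+\delta$, so $y$ and $\theta(y)$ lie in the same path component of $L_{s+\epsilon+\delta,k}(Y)$; arguing as before, $i_{\epsilon}\theta_{\delta}(T)$ contains $\theta(T)$ and coincides with the component $\overline{T}$ containing $T$. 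This exhibits the interleaving given by $(i,\theta)$.

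The only real work is the two density checks in the second paragraph — confirming that $x\in X$, respectively $\theta(y)$, is an honest vertex of the relevant Lesnick complex at the shifted parameter — and this is precisely where the three hypotheses $\delta\geq 2h(Y,X)$, $c\leq\delta$, and $N_k(X,Y)-\epsilon\leq c$ are consumed; the connectivity statements and the commutativity of the triangles are then routine triangle-inequality estimates.
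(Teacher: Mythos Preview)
Your proof is correct and follows essentially the same route as the paper's: construct $i_\epsilon$ and $\theta_\delta$ using the density bound $N_k(X,Y)\leq c+\epsilon$ and the triangle-inequality estimate $d(\theta(y),\theta(y'))\leq s+2h(Y,X)\leq s+\delta$, then verify the two triangles by connecting $y$ to $\theta(y)$ at the shifted parameter. The only cosmetic differences are that the paper simplifies by assuming $\theta|_X=\mathrm{id}_X$ (making the upper triangle trivial), whereas you instead note $d(x,\theta(x))\leq h(Y,X)$; and you are slightly more explicit than the paper about why $\theta(y)$ is a vertex of $L_{s+\epsilon+\delta,k}(Y)$ in the lower-triangle check.
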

\begin{proof}
Let $s \geq c$. Since \[N_k(X, Y) \leq c+\epsilon \leq s+\epsilon,\] each $x \in X$ has at least $k$ $(s+\epsilon)$-neighbours in $Y$.  So, for any $[x]_s$ in $L_0 X[c](s)$, the cluster $[x]_{s+\epsilon}$ exists in $L_k Y(s+\epsilon)$. Since $s+\epsilon \geq s$, $[x]_{s+\epsilon}$ must contain $i([x]_s)$.

Fix any map $\theta:Y \to X$ with $d\big(y, \theta(y)\big) \leq h(Y,X)$ for all $y \in Y$.  For convenience, we will assume $\theta(x)=x$ for all $x \in X$, but the result holds even if this assumption is dropped. For any $[y]_s \in L_k Y(s)$ and any $y_0 \in [y]_{s}$, there exists a sequence of points in $Y$,
\[ y_0, y_1, y_2, \ldots, y_{n-1}, y_n=y \]
satisfying $d(y_i, y_{i+1}) \leq s$ for all $0 \leq i < n$.  For any such $i$,
\begin{align*} d\big(\theta(y_i), \theta(y_{i+1}) \big) &\leq d(\theta(y_i), y_i) + d(y_i, y_{i+1}) + d(y_{i+1}, \theta(y_{i+1})) \\ &\leq s+2h(Y,X) \\ &\leq s+\delta. \end{align*}
So, $\theta([y]_s)$ is contained in the cluster, $[\theta(y)]_{s+\delta}$ in $L_k X(s+\delta)$.  

Thus $i_{\epsilon}$ and $\theta_\delta$ are well defined in the diagrams
\[\adjustbox{scale=0.9}{\begin{tikzcd}[column sep=tiny, scale=0.5, every label/.append style = {font = \normalsize}]
L_0 X[c](s) \ar[rr, "(id_Y)_{\scaleto{{\epsilon}+\delta}{7pt}}"] \ar[dr, "i_{\scaleto{\epsilon}{5pt}}", swap] 
& & L_0 X[c](s+\epsilon+\delta)   \\
& L_k Y(s+\epsilon) \ar[ur,"\theta_{\scaleto{\delta}{6pt}}", swap]  &  
\end{tikzcd}  \begin{tikzcd}[column sep=tiny, scale=0.5, every label/.append style = {font = \normalsize}]
 & L_0 X[c](s+\delta) \ar[dr, "i_{\scaleto{\epsilon}{5pt}}"] & \\
 L_k Y(s) \ar[ur,"\theta_{\scaleto{\delta}{6pt}}"] \ar[rr, swap,  "(id_Y)_{\scaleto{\epsilon+\delta}{7pt}}"] & & L_k Y(s+\epsilon+\delta).
\end{tikzcd}}\]  
for all $s$.
It remains to show the diagrams commute. 

For any $[y]_{s} \in L_k Y(s)$, 
\[ (id_Y)_{\epsilon+\delta}\left([y]_{s+\epsilon}\right) = [y]_{s+2\epsilon+\delta} = [\theta(y)]_{s+2\epsilon+\delta} = i_\epsilon \theta_\delta [y]_{s+\epsilon},  \]
since $d(y,\theta(y)) \leq h(Y,X) \leq \frac{1}{2} \delta \leq s+\epsilon+\delta$.
Likewise,
\[ (id_Y)_{\scaleto{\epsilon+\delta}{6pt}} \left([x]_s \right)= [x]_{s+\epsilon+\delta} = [\theta(x)]_{s+\epsilon+\delta} = \theta_\delta i_\epsilon [x]_s  \]
for all $[x]_s \in L_0 X [c](s)$.
\end{proof}

There is therefore a homotopy commutative diagram
\[\begin{tikzcd}[column sep=large]
    \Lambda(L_0 X[c]) \ar[d, "i_{\scaleto{\epsilon}{3.5pt}}"] \ar[r, "(id_Y)_{\scaleto{\epsilon+\delta}{6pt}}"]& \Lambda(L_0 X[c]) \ar[d, "i_{\scaleto{\epsilon}{3.5pt}}"]\\
\Lambda(L_k Y) \ar[r, "(id_Y)_{\scaleto{\epsilon +\delta}{6pt}}", swap] \ar[ur, "\theta_{\scaleto{\delta}{5pt}}"] & \Lambda(L_k Y).
\end{tikzcd}\] 

\begin{cor}\label{cor:X=Y}
For all ${\epsilon}, {\delta} \geq 0$ satisfying $N_k (X, X) - \epsilon \leq c \leq \delta$, $(X, L_0 X [c])$ is an $(\epsilon, \delta)$-approximation for $(X, L_k X)$.  In particular, $(X, L_0 X [c])$ is an $\left(N_k(X,X), 0\right)$-approximation for $(X, L_k X)$.
\end{cor}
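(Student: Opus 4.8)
\emph{Proof proposal.} The plan is to invoke Lemma~\ref{lem:posets} in the special case $Y = X$. The only hypothesis of that lemma that is not already present in the statement of the corollary is $\delta \geq 2h(Y,X)$, so the first step is to observe that this condition is automatic here: for every $x \in X$ we have $\inf_{x' \in X} d(x,x') = 0$, the infimum being attained at $x' = x$, hence $h(X,X) = \sup_{x \in X}\inf_{x' \in X} d(x,x') = 0$, and therefore $\delta \geq 2h(X,X) = 0$ holds for every admissible $\delta \geq 0$.

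With this in hand, substituting $Y = X$ and $E = L_k X$ into Lemma~\ref{lem:posets} immediately yields, for all $\epsilon, \delta \geq 0$ with $N_k(X,X) - \epsilon \leq c \leq \delta$, an $(\epsilon,\delta)$-interleaving of $(L_0 X[c],\, L_k X)$ given by $(i,\theta)$, where $i = \mathrm{id}_X$ is the (trivial) inclusion of $X$ into itself. The map $\theta\colon X \to X$ required by the definition of an approximation must satisfy $d(x,\theta(x)) \leq h(X,X) = 0$, which forces $\theta = \mathrm{id}_X$; this is exactly the self-map used in the proof of Lemma~\ref{lem:posets}. Thus $(X, L_0 X[c])$ is an $(\epsilon,\delta)$-approximation for $(X, L_k X)$, which proves the first assertion.

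For the final ``in particular'' claim, I would take $\epsilon = N_k(X,X)$ and $\delta = 0$. The constraint $N_k(X,X) - \epsilon \leq c \leq \delta$ then reads $0 \leq c \leq 0$, forcing $c = 0$, and $L_0 X[0] = L_0 X$ since truncating at $0$ removes nothing ($L_0 X[0](s)$ and $L_0 X(s)$ are both empty for $s < 0$ and both equal $\pi_0 V_s(X)$ for $s \geq 0$). Hence $(X, L_0 X)$ is an $(N_k(X,X),\, 0)$-approximation for $(X, L_k X)$. I do not expect any genuine obstacle: the entire content of the corollary is the observation $h(X,X) = 0$, which makes the $\delta \geq 2h(Y,X)$ hypothesis of Lemma~\ref{lem:posets} vacuous and permits $\theta$ to be taken as the identity on $X$.
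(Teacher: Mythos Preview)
Your proposal is correct and follows exactly the approach the paper takes: apply Lemma~\ref{lem:posets} with $Y=X$, observing that $h(X,X)=0$ makes the condition $\delta \geq 2h(Y,X)$ automatic. Your added explanation of the ``in particular'' clause---that setting $\epsilon = N_k(X,X)$ and $\delta = 0$ forces $c=0$, whence $L_0 X[c] = L_0 X$---is a correct and useful elaboration that the paper leaves implicit.
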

This follows from Lemma~\ref{lem:posets} by taking $Y=X$, which implies $h(Y,X)=0$.  Note that $N_k(X,X)$ gives the greatest lower bound on the distance parameter, $s$, for which the vertex set of $L_{k, s} X$ is equal to $X$.

\subsection{Retracts of Lesnick Clusterings}
  As in \cite{Layers}, we call the distances between elements of $X$ the phase change numbers of $X$.  Since $X$ is finite, there are a finite number of phase change numbers,  
\[ s_0 < s_1 < s_2 < \cdots < s_U. \]
Fix some number $M$ such that $\pi_0 L_{s, 0}(X) = \{X\}$ for all $s \geq s_{M}$.  It always works to take $M = U$, but choosing a smaller value of $M$, if possible, will give a less restrictive condition on $X$ in Theorem~\ref{theorem:main}.

The distance parameter associated to any layer point of $(X ,L_0 X[c])$ must be a phase change number and must be no larger than $s_M$.  Using this fact, we prove the following theorem. 
\begin{theorem}\label{theorem:main}
Suppose $c, \epsilon, \delta$ are nonnegative numbers satisfying $\delta \geq 2h(Y,X)$ and $N_k(X,Y) - \epsilon \leq c \leq \delta$.  If \[c+\epsilon+\delta< s_{i+1}-s_i\]  for all $0 \leq i < M$, then $\Lambda(L_0 X[c])$
is a retract of $\Lambda(L_k Y)$. 
\end{theorem}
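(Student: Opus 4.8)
The plan is to prove that the section $i_\epsilon$ and retraction $\theta_\delta$ supplied by Lemma~\ref{lem:posets} satisfy $\theta_\delta\circ i_\epsilon=\mathrm{id}$ on $\Lambda(L_0X[c])$; this is exactly what exhibits $\theta_\delta$ as the claimed retract. Both $L_0X[c]$ and $L_kY$ are single-parameter clusterings, so each carries the well-defined maximal-layer-point retraction $m$, and on layer points $i_\epsilon=m_{L_kY}\circ i_\epsilon^\Gamma$ and $\theta_\delta=m_{L_0X[c]}\circ\theta_\delta^\Gamma$ (precomposed with the inclusions $\Lambda\hookrightarrow\Gamma$), where $i_\epsilon^\Gamma,\theta_\delta^\Gamma$ are the poset maps on $\Gamma$ coming from the interleaving. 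I would then lean on two structural facts about $L_0X=\pi_0 V_\bullet(X)$. First, the path component $[x]^X_t$ of any $x\in X$ is nondecreasing in $t$ and changes only at the phase change numbers $s_0<\dots<s_U$, and equals $X$ once $t\ge s_M$; hence the parameter of any layer point of $L_0X[c]$ is either $c$ or a phase change number, and writing $s_j$ for the largest phase change number $\le s$ at such a parameter $s$ (or using that the cluster is $X$ and constant from $s$ on), the hypothesis $c+\epsilon+\delta<s_{j+1}-s_j$ forces $s+\epsilon+\delta<s_{j+1}$, so no phase change number lies in $(s,s+\epsilon+\delta]$. Second, since every $L_kY$-cluster first appears at a parameter $\ge 0$ and $c\le\delta$, any layer point of $L_kY$ shifted up by $\delta$ has parameter $\ge c$, so $\theta_\delta^\Gamma$ always lands in the untruncated range where $L_0X[c](t)=\pi_0 V_t(X)$.

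As a warm-up I would first note that the first fact already makes the top arrow $(\mathrm{id}_Y)_{\epsilon+\delta}$ of the square after Lemma~\ref{lem:posets}, regarded as a self-map of $\Lambda(L_0X[c])$, equal to the identity: for a layer point $(s,S)$ the cluster $[x]^X$ is constant on $[s,s+\epsilon+\delta]$, so $(\mathrm{id}_Y)_{\epsilon+\delta}^\Gamma(s,S)=(s+\epsilon+\delta,S)$ stays in the layer of $(s,S)$, which $m_{L_0X[c]}$ returns to $(s,S)$. Combining this with $\iota\circ m\le\mathrm{id}$ on $\Gamma(L_kY)$, monotonicity of $\theta_\delta^\Gamma$ and $m_{L_0X[c]}$, and the commuting $\Gamma$-square $\theta_\delta^\Gamma i_\epsilon^\Gamma=(\mathrm{id}_Y)_{\epsilon+\delta}^\Gamma$ (induced by Lemma~\ref{lem:posets}) gives $\theta_\delta i_\epsilon\le(\mathrm{id}_Y)_{\epsilon+\delta}=\mathrm{id}$; what is left is to upgrade this to an equality.

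For the equality I would fix a layer point $(s,S)$ of $L_0X[c]$ and $x\in S$, and trace it through explicitly. Since $N_k(X,Y)\le c+\epsilon\le s+\epsilon$, every vertex of $S$ has $\ge k$ $(s+\epsilon)$-neighbours in $Y$, so $S$ is connected in $L_{s+\epsilon,k}(Y)$ and $i_\epsilon^\Gamma(s,S)=(s+\epsilon,\overline S)$ with $\overline S=[x]^Y_{s+\epsilon}\supseteq S$; then $i_\epsilon(s,S)=m_{L_kY}(s+\epsilon,\overline S)=(\sigma,\overline S)$ for some $\sigma\le s+\epsilon$. Applying the chaining estimate of Lemma~\ref{lem:posets} (edges of $L_{s+\epsilon,k}(Y)$ have length $\le s+\epsilon$ and $\theta$ raises pairwise distances by at most $2h(Y,X)\le\delta$) to $\overline S$ shows $\theta(\overline S)\subseteq[x]^X_{s+\epsilon+\delta}$, which is $S$ by the phase-constancy above; since $\theta(\overline S)\supseteq\theta(S)=S$ as well, $\theta(\overline S)=S$. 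Hence $\theta_\delta^\Gamma(\sigma,\overline S)=(\sigma+\delta,[x]^X_{\sigma+\delta})$, and $[x]^X_{\sigma+\delta}$ contains $S$ while being contained in $[x]^X_{s+\epsilon+\delta}=S$ by monotonicity --- this step uses only $\sigma+\delta\le s+\epsilon+\delta$, so no lower bound on $\sigma$ is needed --- so $\theta_\delta^\Gamma(\sigma,\overline S)=(\sigma+\delta,S)$ with $c\le\sigma+\delta<s_{j+1}$. Thus $(\sigma+\delta,S)$ lies in the layer of $L_0X[c]$ whose unique layer point is $(s,S)$, and $\theta_\delta i_\epsilon(s,S)=m_{L_0X[c]}(\sigma+\delta,S)=(s,S)$; as $(s,S)$ was arbitrary, $\theta_\delta\circ i_\epsilon=\mathrm{id}$.

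I expect the main obstacle to be precisely this parameter bookkeeping: the round trip through $L_kY$ must return the cluster $S$ on the nose rather than merely a cluster containing it, and the intermediate passage $m_{L_kY}(s+\epsilon,\overline S)$ can only decrease the middle parameter $\sigma$. The point that makes it go through is that the three smallness hypotheses ($c+\epsilon+\delta<s_{j+1}-s_j$, $2h(Y,X)\le\delta$, and $c\le\delta$) jointly confine both the image $\theta(\overline S)$ and the shifted parameter $\sigma+\delta$ to the single phase $[s_j,s_{j+1})$ around $s$, while monotonicity of path components means only the upper bound $\sigma\le s+\epsilon$ (and the trivial $\sigma\ge 0$) is ever used.
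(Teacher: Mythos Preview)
Your argument is correct and, in fact, takes a cleaner route than the paper's own proof. Both proofs start from the interleaving of Lemma~\ref{lem:posets} and aim to show that $\theta_\delta\circ i_\epsilon$ is the identity on $\Lambda(L_0X[c])$, and both use the gap hypothesis to conclude that $[x]^X$ is constant on $[s,s+\epsilon+\delta]$. The difference lies in how the inclusion $S\subseteq [x]^X_{\sigma+\delta}$ is obtained. The paper proves a separate Lemma~\ref{lem:param}, which uses the branch-point property of $L_0X$ layer points to extract a \emph{lower bound} $\sigma\ge s-2h(Y,X)$; together with $\delta\ge 2h(Y,X)$ this gives $\sigma+\delta\ge s$, and then monotonicity yields $S=[x]^X_s\subseteq[x]^X_{\sigma+\delta}$. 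You bypass this entirely by observing that $S=\theta(S)\subseteq\theta(\overline S)\subseteq [x]^X_{\sigma+\delta}$, the last containment coming for free from the well-definedness of $\theta_\delta$ in Lemma~\ref{lem:posets}. As you note, this uses only the trivial bound $\sigma\ge 0$ (to land above the truncation level $c$) and the upper bound $\sigma\le s+\epsilon$.

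What your approach buys is economy: it removes the need for Lemma~\ref{lem:param} and its branch-point argument, and it treats the cases $s=c$ and $s=s_j$ uniformly rather than separately (the paper's proof splits into these two cases). What the paper's approach buys is the auxiliary statement of Lemma~\ref{lem:param} itself, namely a two-sided estimate $s-2h(Y,X)\le \sigma\le s+\epsilon$ on where $i_\epsilon$ lands in $\Lambda(L_kY)$, which is of some independent interest even though it is not strictly needed for Theorem~\ref{theorem:main}. One small presentational point: your inclusion $\theta(S)=S$ uses the convention $\theta|_X=\mathrm{id}$, which the paper adopts explicitly in the proof of Lemma~\ref{lem:posets}; it would be worth flagging this in your write-up as well.
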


In fact, the condition $c+\epsilon+\delta < s_{i+1}-s_i$ can be replaced with the weaker condition that $s - t > \epsilon + \delta$ for all $(t, T) < (s, S) \in \Lambda(L_0 X[c])$.  However, checking the condition $s_{i+1} - s_i > c+\epsilon+\delta$ does not require computing the set of layer points.  

To prove Theorem~\ref{theorem:main}, we will show that in the diagram
\[\begin{tikzcd}[column sep=large]
    \Lambda(L_0 X[c]) \ar[d, "i_{\scaleto{\epsilon}{3.5pt}}"] \ar[r, "(id_Y)_{\scaleto{\epsilon+\delta}{6pt}}"]& \Lambda(L_0 X[c]) \ar[d, "i_{\scaleto{\epsilon}{3.5pt}}"]\\
\Lambda(L_k Y) \ar[r, "(id_Y)_{\scaleto{\epsilon +\delta}{6pt}}", swap] \ar[ur, "\theta_{\scaleto{\delta}{5pt}}"] & \Lambda(L_k Y).
\end{tikzcd}\]
the composition $\theta_\delta i_\epsilon : \Lambda(L_0 X[c]) \to \Lambda(L_0 X[c])$ is the identity.  We begin by bounding the shift of parameter in the map $i_\epsilon$.

\begin{lem}\label{lem:param}
Let $(t, [x]):= i_\epsilon (s, [x])$ for some $(s, [x]) \in \Lambda(L_0 X[c])$ with $s>c$.  Then \[s-2h(Y,X)\leq t \leq s+\epsilon.\]
\end{lem}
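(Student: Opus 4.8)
The plan is to track how the cluster $[x]$ and its distance parameter change under $i_\epsilon$, using the interleaving structure from Lemma~\ref{lem:posets} together with the fact that $i_\epsilon$ sends layer points to layer points only after composing with the retraction $m$ to maximal layer points. First I would recall that by definition $i_\epsilon(s, [x]_s) = m_{L_k Y}\bigl(s+\epsilon, [x]_{s+\epsilon}\bigr)$, so that $(t, [x]) = i_\epsilon(s, [x])$ is the maximal layer point of $L_k Y$ below $(s+\epsilon, [x]_{s+\epsilon})$ in $\Gamma(L_k Y)$. This immediately gives the upper bound $t \le s+\epsilon$, since $m$ can only move downward in the single-parameter poset $\Gamma(L_k Y)$.

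For the lower bound, the key is that the cluster $[x]$ appearing in $(t, [x]) = i_\epsilon(s,[x])$ is the same cluster (as a subset of $Y$) as the one appearing at parameter $s+\epsilon$, just realized at the smaller parameter $t$ where it first becomes a path component in $L_{-,k}(Y)$. I want to argue that $t \ge s - 2h(Y,X)$ by contradiction: suppose $t < s - 2h(Y,X)$. Then I would use the other leg of the interleaving, namely $\theta_\delta : L_k Y \to L_0 X[c]$, which is well-defined and carries $[y]_r$ to $[\theta(y)]_{r+\delta}$ with $\delta \ge 2h(Y,X)$. Applying $\theta$ (composed with the map $m$ to layer points of $L_0X[c]$, or just tracking clusters) to the layer containing $(t,[x])$ would produce a cluster of $X$ at parameter roughly $t + 2h(Y,X) < s$ that contains $\theta([x]) \supseteq [x]$ (recall $\theta$ fixes $X$ pointwise under our convention, so $\theta$ of the $Y$-cluster $[x]$ contains the original $X$-cluster $[x]_s$). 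This would mean $[x]_s$ is contained in a strictly larger $X$-cluster at a parameter strictly below $s$, i.e., $(s, [x]_s)$ would fail to be a global layer point of $L_0 X[c]$ — contradicting the hypothesis $(s,[x]) \in \Lambda(L_0 X[c])$. The hypothesis $s > c$ is needed here to ensure $(s',[x]_{s'})$ for $s'$ slightly below $s$ actually lies in $\Gamma(L_0 X[c])$ and not in the truncated-away region.

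The main obstacle I anticipate is making the second step precise: one must be careful that the cluster $[x]$ in $(t,[x]) = i_\epsilon(s,[x])$, viewed as a subset of $Y$, really does pull back under $\theta$ to something containing the $X$-cluster $[x]_{s}$, and that the parameter at which this pulled-back cluster is defined is at most $t + 2h(Y,X)$. This requires invoking the commutativity of the interleaving triangles from Lemma~\ref{lem:posets} at the level of $\Gamma$, and using that $[x]_{s}$ as a path component of $L_{s,0}(X)$ maps into $[x]_{s+\epsilon}$ in $L_k Y$, then applying $\theta_\delta$ and comparing with $(id_X)$. Once that chain of containments is set up, the contradiction with the layer-point property of $(s,[x])$ is immediate, and together with $t \le s+\epsilon$ this establishes the claimed bounds.
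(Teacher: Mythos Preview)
Your overall strategy is sound and parallels the paper's proof: both obtain the upper bound $t \le s+\epsilon$ immediately from the definition of $m$, and for the lower bound both push the $Y$-cluster at parameter $t$ back to $X$ via $\theta$ with shift $2h(Y,X)$, then invoke the minimality of $s$. However, your statement of the contradiction is garbled. You write that $[x]_s$ would be ``contained in a strictly larger $X$-cluster at a parameter strictly below $s$''; this is impossible in any covariant hierarchical clustering, since clusters only grow with the parameter. What actually happens is that the $X$-cluster at parameter $t+2h(Y,X)$ containing $\theta$ of the $Y$-cluster both contains $[x]_s$ (by your argument) and is contained in $[x]_s$ (by covariance, since $t+2h(Y,X)<s$), hence \emph{equals} $[x]_s$. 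The contradiction is that $[x]_s$ already occurs as a cluster at a parameter strictly below $s$ (or at parameter $c$, if $t+2h(Y,X)<c$), violating the layer-point condition for $(s,[x]_s)$. Once this is corrected, your argument goes through.

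The paper's version differs in that it explicitly invokes the fact that layer points of $L_0 X[c]$ with $s>c$ are branch points: it fixes two elements $x_0,x_1\in[x]_s$ that first merge at parameter $s$, observes they lie in the same $Y$-cluster at parameter $t$, applies $\theta$ to place them in the same $X$-cluster at parameter $t+2h(Y,X)$, and concludes $t+2h(Y,X)\ge s$ directly from the branch-point property. This avoids reasoning about the entire cluster $[x]_s$ and sidesteps the containment issue above; your route via the full cluster is equally valid but requires the extra (easy) step of showing the resulting $X$-cluster is exactly $[x]_s$, not merely a superset.
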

\begin{proof}
Since $(t, [x])$ is a layer point below $(s+\epsilon,[x]) \in \Gamma(L_k Y)$, $t \leq s +\epsilon$.

To prove $s - 2h(Y,X) \leq t$, we use the fact that the layer points of $L_0 X = \pi_0 V_\bullet (X)$ are all branch points.  This is true for the truncated clustering as well, so for any layer point $(s, [x]) \in \Lambda(L_0 X[c])$, either $s=c$ or, for some $x_0, x_1 \in [x]_s$,  $[x_0]_{w} \neq [x_1]_{w}$ for all $w < s$. 

We assume $s>c$, so there must exist $x_0, x_1 \in [x]_s$ satisfying the second condition.  Since $[x_0] =  [x_1]$ in $L_0 X[c](s) =\pi_0 L_{s,0}(X)=\pi_0 V_s (X)$ and $X \subseteq Y$, $[x_0] = [x_1]$ in $L_k Y(s) = \pi_0 L_{s,k}(Y)$ as well.  

If $(t, [x_0]):= m(s, [x_0])$ in $\Lambda(L_k Y)$, then $|[x_0]_t|=|[x_0]_s|$.  Hence, $[x_0]=[x]=[x_1]$ in $L_k Y(t)$.  
Then there exists a sequence of points in $Y$,
\[ x_0=y_0, y_1, \ldots, y_n = x_1 \]
where each $y_i$ has at least $k$ $t$-neighbours and $d(y_i, y_{i+1})\leq t$.
Thus,
\[ d\big(\theta(y_i), \theta(y_{i+1})\big) \leq d\big(\theta(y_i), y_i\big) + d(y_i, y_{i+1}) + d\big( y_{i+1}, \theta(y_{i+1})\big) \leq t+2h(Y,X) \]
for all $i$. Since $\theta(y_0)=x_0$ and $\theta(y_n)=x_n$, this implies $x_0$ and $x_1$ are in the same path component 
in $L_0 X[c]\big(t+2h(Y,X)\big)$.  Thus, $t+2h(Y,X) \geq s$.\end{proof}

This bound allows us to prove the main theorem.
 
\begin{proof}[Proof of Theorem~\ref{theorem:main}]
Suppose $c, \epsilon, \delta$ are nonnegative numbers satisfying $\delta \geq 2h(Y,X)$, $N_k(X,Y) - \epsilon \leq c \leq \delta$, and \[c+\epsilon+\delta< s_{i+1}-s_i\]  for all $0 \leq i < M$.
Consider the homotopy commutative diagram
\[\begin{tikzcd}[column sep=large]
    \Lambda(L_0 X[c]) \ar[d, "i_{\scaleto{\epsilon}{3.5pt}}"] \ar[r, "(id_Y)_{\scaleto{\epsilon+\delta}{6pt}}"]& \Lambda(L_0 X[c]) \ar[d, "i_{\scaleto{\epsilon}{3.5pt}}"]\\
\Lambda(L_k Y) \ar[r, "(id_Y)_{\scaleto{\epsilon +\delta}{6pt}}", swap] \ar[ur, "\theta_{\scaleto{\delta}{5pt}}"] & \Lambda(L_k Y).
\end{tikzcd}\]
If $(s, [x]) \in \Lambda(L_0 X[c])$, then $s=c$ or $s$ is a phase change number of $X$.
The first phase change number, $s_0$, is zero, so, \[s_1 = s_1-s_0 > c+\epsilon+\delta.\]

In the case where $s$ is a phase change number, we have for $(q, [x]) :=  \theta_\delta i_\epsilon (s, [x])$, \[q \leq c+\epsilon+\delta.\]  Then, 
\[ s_0=0 < c \leq q \leq c+\epsilon+\delta < s_1. \]
Since $q$ is therefore not a phase change number, $q = c$.

In the case where $s>c$, the distance from $s$ to the next phase change number is greater than $c+\epsilon+\delta$. 
 Lemma~\ref{lem:param} implies that, for $(t, [x]):= i_\epsilon (s,[x])$, \[s-2h(Y,X) \leq t  \leq s+\epsilon. \]
Since $2h(Y,X) \leq \delta$,
\[ s \leq s+\delta-2h(Y,X) \leq t+\delta \leq s+\epsilon+\delta.  \]
Because $(q, [x]):= \theta_\delta i_\epsilon(s, [x])$ is the maximal branch point below $(t+\delta, [x])$ and $(s, [x])$ is a branch point, \[s \leq q \leq s+\epsilon+\delta.\]
The distance from $s$ to the next phase change number is greater than $\epsilon+\delta$, hence $q=s$.
\end{proof}
Under the assumptions of Proposition 15, \[(id_Y)_{\epsilon+\delta}: \Lambda(L_0 X[c]) \to \Lambda(L_0 X[c])\] is the identity. So in this case, the upper triangle in the diagram of layer points commutes on the nose, not just up to homotopy.

\section{Special Cases of Layer Stability}
In this section we consider several special cases of Theorem~\ref{theorem:main}.  Suppose $\epsilon$, $\delta$, and $c$ are nonnegative numbers satisfying $N_k(X,Y)-\epsilon \leq c \leq \delta$ and $\delta \geq 2h(Y,X)$.  The map
\begin{align*}  i_\epsilon : \Lambda(L_0 X[c]) &\to \Lambda(L_k Y)
\end{align*}
exists as long as $\epsilon+c \geq N_k(X,Y)$.  So, in order to minimize the term $c+\epsilon+\delta$, we can start by recognizing that $\epsilon$ never needs to be greater than $N_k(X,Y)$ to define the interleaving of layer points.  For any fixed value $\epsilon$ between 0 and $N_k(X,Y)$, the minimal values for $c$ and $\delta$ are $N_k(X,Y)-\epsilon$ and $\max\{2h(Y,X), N_k(X,Y)-\epsilon\}$, respectively.
Choosing these values for $c$ and $\delta$ gives a homotopy commutative diagram for any $0 \leq \epsilon \leq N_k(X,Y)$,
\begin{align} \label{diagram}
\begin{tikzcd}[column sep=huge, ampersand replacement=\&]
    \Lambda(L_0 X[N_k(X,Y)-\epsilon]) \ar[d, "i_{\scaleto{\epsilon}{3.5pt}}"] \ar[r, "(id_Y)_{\scaleto{\epsilon+n}{6pt}}"] \& \Lambda(L_0 X[N_k(X,Y)-\epsilon]) \ar[d, "i_{\scaleto{\epsilon}{3.5pt}}"]\\
\Lambda(L_k Y) \ar[r, "(id_Y)_{\scaleto{\epsilon +n}{6pt}}", swap] \ar[ur, "\theta_{\scaleto{n}{3pt}}"] \& \Lambda(L_k Y)
\end{tikzcd}
\end{align}
where $n = \max\{ 2h(Y,X), N_k(X,Y)-\epsilon \}$.
If \[N_k(X,Y)+n=\max\{ N_k(X,Y)+2h(Y,X), 2N_k(X,Y)-\epsilon \}< s_{i+1}-s_i\] for all $0 \leq i < M$, then \[\theta_n i_\epsilon = (id_Y)_{n+\epsilon} = id_{\Lambda(L_0 X[N_k(X,Y)-\epsilon])}.\]   
Note that if $N_k(X,Y)+2h(Y,X) \geq s_{i+1}-s_i$ for some $i$, there is no choice of $\epsilon$ that will give direct commutativity in the upper triangle. However, if \[N_k(X,Y)+2h(Y,X) < s_{i+1}-s_i\] for all $i$ then any nonnegative $\epsilon$ between $N_k(X,Y)-2h(Y,X)$ and $N_k(X,Y)$ will guarantee direct commutativity. 

Setting $\epsilon=N_k(X,Y)$ in (\ref{diagram}) gives the following result. 
\begin{cor}\label{cor:smallparam}
If $N_k(X,Y)+2h(Y,X) < s_{i+1}-s_i$ for all $0 \leq i < M$, then there is a diagram,
\[\begin{tikzcd}
\Lambda(L_0 X) \ar[d] \ar[r, "id"]& \Lambda(L_0 X) \ar[d]\\
\Lambda(L_k Y) \ar[r] \ar[ur] & \Lambda(L_k Y)
\end{tikzcd}\]
where the bottom triangle commutes up to homotopy and the top triangle commutes directly, defining a retract
\[ \Lambda(L_k Y) \to \Lambda(L_0 X). \]   
\end{cor}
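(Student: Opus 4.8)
The plan is to obtain the diagram as the special case of diagram~(\ref{diagram}) (equivalently, of Theorem~\ref{theorem:main}) in which $\epsilon$ is pushed up as far as is ever useful, namely $\epsilon = N_k(X,Y)$. With this choice the minimal admissible value of the truncation parameter is $c = N_k(X,Y) - \epsilon = 0$, and the quantity $n = \max\{2h(Y,X),\, N_k(X,Y)-\epsilon\}$ collapses to $2h(Y,X)$, so the relevant triple is $(c,\epsilon,\delta) = (0,\, N_k(X,Y),\, 2h(Y,X))$. The key observation that makes the corollary fall out is that this forces the truncation to be trivial.

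First I would check that $L_0 X[0] = L_0 X$: directly from the definitions, $L_0 X[0](s)$ equals $\pi_0 V_s(X)$ for $s \geq 0$ and $\emptyset$ otherwise, which is exactly $L_0 X(s) = L(s,0)$. Hence $\Lambda(L_0 X[0]) = \Lambda(L_0 X)$, and diagram~(\ref{diagram}) with $\epsilon = N_k(X,Y)$ is literally the asserted diagram, with $i_{N_k(X,Y)}$ for the two vertical maps, $\theta_{2h(Y,X)}$ for the diagonal, and $(id_Y)_{N_k(X,Y)+2h(Y,X)}$ for the two horizontal maps. Next I would verify the numerical hypotheses of Theorem~\ref{theorem:main} for this triple: $\delta = 2h(Y,X) \geq 2h(Y,X)$; $N_k(X,Y)-\epsilon = 0 = c$ and $c = 0 \leq \delta$ (using $h(Y,X) \geq 0$); and, by the hypothesis of the corollary, $c+\epsilon+\delta = N_k(X,Y)+2h(Y,X) < s_{i+1}-s_i$ for all $0 \leq i < M$.

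With those checks in place, Theorem~\ref{theorem:main} gives $\theta_{2h(Y,X)} \circ i_{N_k(X,Y)} = \mathrm{id}_{\Lambda(L_0 X)}$, which is exactly the commutativity of the top triangle; moreover the top horizontal map $(id_Y)_{N_k(X,Y)+2h(Y,X)}$ is the identity on $\Lambda(L_0 X)$ under these conditions, by the remark following Theorem~\ref{theorem:main}, so the top triangle commutes strictly. The bottom triangle is a piece of the homotopy commutative diagram produced from the $(\epsilon,\delta)$-approximation of Lemma~\ref{lem:posets}, so it commutes up to a homotopy of poset maps. Therefore $i_{N_k(X,Y)} : \Lambda(L_0 X) \to \Lambda(L_k Y)$ and $\theta_{2h(Y,X)} : \Lambda(L_k Y) \to \Lambda(L_0 X)$ exhibit $\Lambda(L_0 X)$ as a retract of $\Lambda(L_k Y)$, with retraction $\theta_{2h(Y,X)}$.

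I do not expect a genuine obstacle: the statement is a bookkeeping specialization of Theorem~\ref{theorem:main}. The only two points needing care are (i) recognizing that raising $\epsilon$ to $N_k(X,Y)$ is precisely what drops the truncation parameter $c$ to $0$, so that $L_0 X[c]$ becomes the untruncated $L_0 X$ and the conclusion concerns $\Lambda(L_0 X)$ rather than $\Lambda(L_0 X[c])$, and (ii) keeping track of which triangle commutes strictly (the top one, from $\theta_\delta i_\epsilon = \mathrm{id}$ together with $(id_Y)_{\epsilon+\delta} = \mathrm{id}$) and which only up to homotopy (the bottom one).
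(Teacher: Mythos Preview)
Your proposal is correct and follows exactly the paper's approach: the corollary is obtained by setting $\epsilon = N_k(X,Y)$ in diagram~(\ref{diagram}), which forces $c=0$ (hence $L_0 X[c]=L_0 X$) and $n=2h(Y,X)$, after which Theorem~\ref{theorem:main} and the remark following it yield the strict commutativity of the top triangle. Your verification of the numerical hypotheses and the identification $L_0X[0]=L_0X$ are the only details to fill in, and you have handled them.
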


We can also consider the case where $X=Y$, implying $h(Y, X)=0$.  If $N_k(X, X) \leq p$, then for any nonnegative $c,\epsilon,$ and $ \delta$ satisfying $N_k(X,Y)-\epsilon \leq c \leq \delta$, Corollary~\ref{cor:X=Y} implies the existence of a homotopy commutative diagram of posets,
\[\begin{tikzcd}[column sep=large]
    \Lambda(L_0 X[c]) \ar[d, "i_{\scaleto{\epsilon}{3.5pt}}"] \ar[r, "(id_X)_{\scaleto{\epsilon+\delta}{6pt}}"]& \Lambda(L_0 X[c]) \ar[d, "i_{\scaleto{\epsilon}{3.5pt}}"]\\
\Lambda(L_k X) \ar[r, "(id_X)_{\scaleto{\epsilon +\delta}{6pt}}", swap] \ar[ur, "\theta_{\scaleto{\delta}{5pt}}"] & \Lambda(L_k X).
\end{tikzcd}\]

However, the condition
\[ c+\epsilon+\delta < s_{i+1}-s_i \]
for all $i$, which would guarantee $\theta_\delta$ is a retract,
can only be met for all $0 \leq i < M$ if $k=0$, and thus $N_k(X,X)=0$.  
\begin{note}
To see that $c+\epsilon+\delta < s_{i+1}-s_i$ for all $i$ implies $k=0$, suppose $k>0$ and therefore $N_k(X,X)>0$.
Take any $x \in X$ and $z \neq x$, an $N_k(X,Y)$-neighbour of $x$.  Then $d(x,z) \leq N_k(X,Y)$.  Since $d(x, z)$ is a positive phase change number, 
\[  s_1 - s_0 = s_1 - 0 \leq d(x, z) \leq N_k(X,Y). \]
However,
\[ N_k(X,Y) \leq N_k(X,Y)+\delta \leq c+\epsilon+\delta, \]
so, $s_1 - s_0 \leq c+\epsilon+\delta$.  
\end{note}

In the case where $k=0$, the only choice for $\epsilon$ in (\ref{diagram}) is $\epsilon=0=N_k(X,Y)$, which results in the rather unhelpful diagram
\[\begin{tikzcd}
\Lambda(L_0 X) \ar[d] \ar[r]& \Lambda(L_0 X) \ar[d, ]\\
\Lambda(L_0 X) \ar[r, ] \ar[ur, ] & \Lambda(L_0 X)
\end{tikzcd}\]
where all the maps are the identity.

A slightly more interesting example can be constructed for any $c$ which is less than $s_{i+1} - s_i$ for all $i$, by setting $\epsilon=0$ and $\delta=c$
\begin{cor}
For any nonnegative number, $c$, satisfying $c< s_{i+1}-s_i$ for all $0 \leq i < M$, there exists a commutative diagram,
\[\begin{tikzcd}\label{cor:X=Y_layers}
\Lambda(L_0 X[c]) \ar[d, "i_0"] \ar[r, "id"]& \Lambda(L_0 X[c]) \ar[d, "i_0"]\\
\Lambda(L_0 X) \ar[r, "id"] \ar[ur, "\theta_c"] & \Lambda(L_0 X).
\end{tikzcd}\]
\end{cor}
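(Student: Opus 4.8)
The plan is to treat this as the degenerate case of the interleaving produced by Lemma~\ref{lem:posets} in which $Y=X$, $k=0$, $\epsilon=0$, and $\delta=c$. Here $h(Y,X)=0$, $N_k(X,Y)=N_0(X,X)=0$, and $L_kY = L_0^2 X = L_0 X$, so the hypotheses $N_k(X,Y)-\epsilon\le c\le\delta$ and $\delta\ge 2h(Y,X)$ of Lemma~\ref{lem:posets} become $0\le c\le c$ and $c\ge 0$, both automatic. Thus Lemma~\ref{lem:posets} already supplies the square with vertical maps $i_0$ and diagonal $\theta_c$, together with homotopy commutativity of the two triangles; the square itself commutes strictly because both composites around it equal $i_0$. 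What remains is to promote the triangles to strict commutativity, i.e.\ to prove $\theta_c\circ i_0=\mathrm{id}_{\Lambda(L_0X[c])}$ and $i_0\circ\theta_c=\mathrm{id}_{\Lambda(L_0X)}$; this simultaneously realises the isomorphism $\Lambda(L_0X[c])\cong\Lambda(L_0X)$ mentioned in the introduction.

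Note that a black-box appeal to Theorem~\ref{theorem:main} with $\epsilon=0$, $\delta=c$ would require $2c<s_{i+1}-s_i$, which is strictly stronger than the hypothesis $c<s_{i+1}-s_i$ here, so the argument must use the specific combinatorics of $L_0X=\pi_0 V_\bullet(X)$ rather than the general statement. The two facts I would record are: (i) in $\pi_0 V_\bullet(X)$ every cluster $S$ that occurs has a smallest parameter of occurrence $p_S$, which is a phase change number with $p_S\le s_M$, no merge happens strictly between consecutive phase change numbers, and the retraction to layer points satisfies $m_{L_0X}(q,S)=(p_S,S)$; (ii) $\Gamma(L_0X[c])$ is exactly $\{(s,S)\in\Gamma(L_0X): s\ge c\}$, the layer point of $L_0X[c]$ carrying $S$ (when it exists) sits at parameter $\max(c,p_S)$, and $m_{L_0X[c]}(q,S)=(\max(c,p_S),S)$. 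On the level of $\Gamma$ the interleaving maps are explicit: $(i_0)_\Gamma$ is the inclusion, and $(\theta_c)_\Gamma(r,R)=(r+c,\overline R)$ with $\overline R$ the component of $V_{r+c}(X)$ through $R$; on the level of $\Lambda$ one composes these with the appropriate inclusions and maps $m$.

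With this in hand the two computations are parallel. For $(s,S)\in\Lambda(L_0X[c])$ one has $s=\max(c,p_S)$, hence $i_0(s,S)=m_{L_0X}(s,S)=(p_S,S)$; writing $p_S=s_\ell$ (unless $S=X$, in which case $\overline S=X$ trivially), the hypothesis $c<s_{\ell+1}-s_\ell$ forces $p_S+c<s_{\ell+1}$, so no merge intervenes, $(\theta_c)_\Gamma(p_S,S)=(p_S+c,S)$, and $\theta_c i_0(s,S)=m_{L_0X[c]}(p_S+c,S)=(\max(c,p_S),S)=(s,S)$. Dually, for $(r,R)\in\Lambda(L_0X)$ one has $r=p_R$, so the same ``$r+c$ does not reach the next phase change number'' estimate gives $\theta_c(r,R)=(\max(c,r),R)$ and then $i_0(\max(c,r),R)=m_{L_0X}(\max(c,r),R)=(p_R,R)=(r,R)$. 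The edge cases (the cluster $S=X$, where no ``next'' phase change number is needed; layer points at parameter exactly $c$; and $M=0$, a one-point space) are absorbed into the same bookkeeping.

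The main obstacle I anticipate is bookkeeping rather than conceptual: keeping straight that $i_0$ and $\theta_c$ denote the $\Lambda$-level composites $m\circ(\cdot)_\Gamma\circ\iota$ and not the raw $\Gamma$-level maps, and checking carefully that $c<s_{i+1}-s_i$ is exactly the sharp condition ensuring the upward shift $r\mapsto r+c$ introduced by $\theta_c$ never jumps past a phase change number, so that the truncation retraction $m_{L_0X[c]}$ cancels it on the nose. Once that estimate is pinned down, the two triangles collapse to identities and the whole diagram commutes.
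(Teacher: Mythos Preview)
Your proposal is correct and follows the paper's intended route: specialise the interleaving of Lemma~\ref{lem:posets} to $Y=X$, $k=0$, $\epsilon=0$, $\delta=c$, and then verify that both triangles commute strictly. The paper itself gives no argument beyond naming these parameter choices and asserting that ``both triangles commute directly,'' so your write-up is in fact more complete than the paper's.

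Your observation that a black-box appeal to Theorem~\ref{theorem:main} would need $2c<s_{i+1}-s_i$ is well taken and is the one genuine point requiring care. The sharpening to $c<s_{i+1}-s_i$ comes from exactly the mechanism you isolate: with $h(Y,X)=0$ and $\epsilon=0$, Lemma~\ref{lem:param} forces $i_0$ to leave the parameter unchanged for $s>c$, so only the single shift by $c$ from $\theta_c$ must avoid the next phase change number; and for $s=c$ one uses that $c<s_1$ forces all clusters at level $c$ to be singletons, whence $i_0$ drops the parameter to $0$ before $\theta_c$ adds $c$ back. Your treatment of the lower triangle and of the edge cases ($S=X$, $M=0$, $c=0$) is likewise sound.
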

In this case, both triangles commute directly, not just up to homotopy, so this gives an isomorphism $i_0: \Lambda(L_0 X[c]) \to \Lambda(L_0 X)$.
That is, truncating the poset $\Gamma(L_0 X)$ by a small amount does not change structure of the layer point poset.  
This is unsurprising, and one can directly show the stronger result that there is
an isomorphism $\Lambda(L_0 X[c]) \to \Lambda(L_0 X)$ for any $c < s_1$.
This provides an alternate argument for Corollary~\ref{cor:smallparam}, as in the interleaving diagram of Theorem~\ref{theorem:main} the poset $\Lambda(L_0 X[c])$ can be replaced by $\Lambda(L_0 X)$ when $c< s_{i+1}-s_i$ for all $0 \leq i < M$.

\nocite{*}
\printbibliography

\end{document}